\documentclass{article}
\usepackage{arxiv}
\usepackage{graphicx,amsthm,amsmath,amsfonts,amssymb,txfonts,cmll}
\usepackage{natbib,cancel,url,textcomp}
\usepackage{hyperref}
\usepackage{cleveref}
\title{Compactness in Constructive Mathematics via Affine Logic}
\author{Kazumi Kasaura}
\date{}

\theoremstyle{definition}
\newtheorem{theorem}{Theorem}
\newtheorem{proposition}[theorem]{Proposition}
\newtheorem{lemma}[theorem]{Lemma}
\newtheorem{corollary}[theorem]{Corollary}
\newtheorem{definition}[theorem]{Definition}

\newtheorem{remark}[theorem]{Remark}

\theoremstyle{remark}

\DeclareFontFamily{U}{MnSymbolC}{}
\DeclareSymbolFont{MnSyC}{U}{MnSymbolC}{m}{n}
\DeclareMathSymbol{\diamondplus}{\mathbin}{MnSyC}{"7C}
\DeclareMathSymbol{\diamonddot}{\mathbin}{MnSyC}{"7E}
\DeclareFontShape{U}{MnSymbolC}{m}{n}{
    <-6>  MnSymbolC5
   <6-7>  MnSymbolC6
   <7-8>  MnSymbolC7
   <8-9>  MnSymbolC8
   <9-10> MnSymbolC9
  <10-12> MnSymbolC10
  <12->   MnSymbolC12}{}

\DeclareMathOperator{\inte}{\mathsf{int}}
\DeclareMathOperator{\clo}{\mathsf{cl}}
\DeclareMathOperator{\ofcourse}{\mathop{!}}
\DeclareMathOperator{\whynot}{\mathop{?}}
\def\subsetoc#1{#1^{!}}
\def\subsetwn#1{#1^{?}}

\def\Prop{\mathbf{\Omega}}
\def\P#1{\mathcal{P}(#1)}
\def\O{\mathcal{O}}
\def\linimp{\multimap}

\def\mand{\otimes}

\def\aand{\with}
\def\mor{\parr}
\def\aor{\oplus}
\def\mcap{\mathbin{\boxtimes}}
\def\acap{\mathbin{\sqcap}}
\def\mcup{\mathbin{\diamondplus}}
\def\acup{\mathbin{\sqcup}}
\def\atrue{\top}

\def\mfalse{\bot}
\def\not#1{{#1}^{\bot}}
\def\compl#1{{#1}^{\mathrm{c}}}
\def\N{\mathbb{N}}
\def\Q{\mathbb{Q}}

\def\Set#1#2{\left\{#1\ \middle|\ #2\right\}}

\def\atimes{\mathbin{\times^{\aand}}}
\def\catimes{\mathbin{\boldsymbol{+}^{\aor}}}
\def\mtimes{\mathbin{\times^{\mand}}}

\makeatletter
\newcommand\bigmcup{\mathop{\mathpalette\big@mcup\relax}}
\newcommand\big@mcup[2]{%
  \vcenter{\hbox{\m@th
    \scalebox{\ifx#1\displaystyle 2\else1.2\fi}{$#1\mcup$}%
  }}%
}
\newcommand\bigmor{\mathop{\mathpalette\big@mor\relax}}
\newcommand\big@mor[2]{%
  \vcenter{\hbox{\m@th
    \scalebox{\ifx#1\displaystyle 2\else1.2\fi}{$#1\mor$}%
  }}%
}
\newcommand\bigmand{\mathop{\mathpalette\big@mand\relax}}
\newcommand\big@mand[2]{%
  \vcenter{\hbox{\m@th
    \scalebox{\ifx#1\displaystyle 2\else1.2\fi}{$#1\mand$}%
  }}%
}
\newcommand\bigmcap{\mathop{\mathpalette\big@mcap\relax}}
\newcommand\big@mcap[2]{%
  \vcenter{\hbox{\m@th
    \scalebox{\ifx#1\displaystyle 2\else1.2\fi}{$#1\mcap$}%
  }}%
}

\makeatother

\begin{document}

\maketitle

\begin{abstract}
We study topology, particularly compactness, as an extension of Shulman's work on constructive mathematics via affine logic, while allowing propositional impredicativity.
We introduce a notion of compactness in affine logic and prove the fundamental properties of compactness, including the extreme value theorem and the Heine-Borel theorem for 'cuts', which are a version of Dedekind cuts in affine logic.
Moreover, from the antithesis translation of the Heine-Borel theorem for cuts to intuitionistic logic, we derive the Heine-Borel theorem for one-sided reals intuitionistically, and have verified the proof with an interactive theorem prover.
The code is available at \url{https://github.com/hziwara/CutsHeineBorel}.
\end{abstract}

\section{Introduction}

\cite{shulman2022affine} introduces constructive mathematics via affine logic and a method called 'antithesis translation' to convert a proposition in affine logic to one in intuitionistic logic.
Moreover, it is demonstrated that notions in existing constructive mathematics can be derived through translation from affine logic.
Specifically, in topology, the notion of a topology in affine logic is introduced, and it is shown that its translation includes existing constructive notions for topologies as special cases.

This paper extends this line of work in topology, focusing particularly on compactness.
While the equality structure (Bishop sets) is considered and subsets are defined as predicates compatible with this structure in \cite{shulman2022affine}, we do not adopt this framework.
The reason is its incompatibility with multiplicative logical operators.
See Remark~\ref{remark:equation}.
Instead, we treat all functions to the type of propositions as subsets.

First, in Section~\ref{sec:topology}, we discuss some basic notions of topology.
We introduce an affine version of the notion of compactness and prove some properties corresponding to classical results in Section~\ref{sec:compactness}.
Also, in Section~\ref{sec:cuts}, we introduce the topology on the type of 'cuts' defined in \cite{shulman2022affine}, which is an affine version of the extended Dedekind reals and corresponds to unbounded intervals in the antithesis translation.
We prove the extreme value theorem and the Heine-Borel theorem for the topology.
The proof of the latter depends on the assumption of propositional impredicativity.

In addition, in Section~\ref{sec:translation}, we consider the antithesis translation of the Heine-Borel theorem for cuts.
From this, we derive the proof of the Heine-Borel theorem for one-sided reals and have verified it with the interactive theorem prover.
While this proof is intuitionistic, it shows only existence without giving a way to construct it, because it relies essentially on propositional impredicativity. Finally, we discuss the relationship between our results and the previous work in Section~\ref{sec:related work}.

\section{Notation}

Unlike \cite{shulman2022affine}, we use Girard's original symbols $\mand$, $\mor$, $\aand$, $\aor$ for logical operators, while we use Shulman's symbols $\mcap$, $\mcup$, $\acap$, $\acup$ for operations on subsets.
This may cause some confusion (especially, $\aor$ does not correspond to $\mcup$), but it has the advantage of allowing us to distinguish between symbols for logical operations and set operations.
Also, we use the standard symbols $\forall$, $\exists$, $\subseteq$, etc., even for affine logic. This is because we do not consider intuitionistic logic except \S~\ref{sec:translation}, so there is no need to distinguish between affine ones and intuitionistic ones.
We assign higher precedence to $\mand, \aand$ over $\mor, \aor, \linimp$ in logical expressions.

Let $\Prop$ be the type of propositions.
We assume propositional impredicativity in the sense that propositions referring to $\Prop$ itself are also in $\Prop$.
For a type $X$, let $\P{X}:=(X\to \Prop)$ be the type of subsets on $X$.
We often write $x\in s$ (resp. $x \notin s$) instead of $s(x)$ (resp. $\not{s(x)}$) for $x:X$ and $s:\P{X}$ and define $s$ in the form of $\Set{x:X}{s(x)}$.

For $s,t : \P{X}$ and an indexed family $u_{\_} : \iota \to \P{X}$,
we define them as follows:
\begin{align*}
s\subseteq t &:= \forall x:X, x\in s \linimp x \in t,
&\compl{s} &:= \Set{x:X}{x\notin s} \\
s \mcap t &:= \Set{x:X}{x\in s \mand x \in t},
&s \mcup t &:= \Set{x:X}{x\in s \mor x \in t}, \\
s \acap t &:= \Set{x:X}{x\in s \aand x \in t},
&s \acup t &:= \Set{x:X}{x\in s \aor x \in t}, \\
\subsetoc{s} &:= \Set{x:X}{\ofcourse{(x\in s)}},
&\subsetwn{s} &:= \Set{x:X}{\whynot{(x\in s)}}, \\
\bigcap_{i:\iota} u_i &:= \Set{x:X}{\forall i: \iota,\, x \in u_i},
&\bigcup_{i:\iota} u_i &:= \Set{x:X}{\exists i: \iota,\, x \in u_i}.
\end{align*}
Abusing the symbol, the whole set $\Set{x:X}{\atrue}$ is also denoted by $X$.
The empty set $\emptyset : \P{X}$ is defined as $\Set{x:X}{\mfalse}$.

We say that a subset $s:\P{X}$ is \textit{decidable} if $x\in s \aor x\in \compl{s}$ for any $x:X$.

For $n:\N$, we denote the type of the natural numbers less than $n$ by $[n]$.
While the standard quantifiers $\forall$ and $\exists$ are additive, we can inductively define $\bigmand_{i:[n]} P_i$ and $\bigmor_{i:[n]} P_i$ for a finitely indexed family $P_{\_}:[n] \to \Prop$ of propositions.
We also define $\bigmcap_{i:[n]} u_i$ and $\bigmcup_{i:[n]} u_i$ for a finitely indexed family $u_{\_}:[n] \to \P{X}$ of subsets.

\begin{remark}\label{remark:equation}
In \cite{shulman2022affine}, a set $X$ is defined as a type equipped with an equality relation,  and a subset is defined as a function $s: X\to \Prop$ such that $x=y \mand x\in s \linimp y \in s$.
Since the multiplicative intersection $s\mcap t$ of two subsets $s,t$ is not always a subset, the operator is defined as
\[
s\hat{\mcap} t :=\Set{x:X}{\exists y:X, \left(x=y \mand y \in s \mand y \in t\right)}.
\]
However, this operator is not even associative:
\begin{align*}
\left(s\hat{\mcap} t \right)\hat{\mcap} u&=\Set{x:X}{\exists y,z:X, \left(x=y \mand y=z\mand z \in s \mand z \in t\mand y \in u\right)}, \\
s\hat{\mcap} \left(t \hat{\mcap} u\right)&=\Set{x:X}{\exists y,z:X, \left(x=y \mand y=z \mand y\in s \mand z \in t\mand z \in u\right)}.
\end{align*}
This makes it difficult to consider the multiplicative intersection of finitely many subsets, which is important in our study.
Thus, we do not consider equality relations, and any function to $\Prop$ is treated as a subset.
\end{remark}

\section{Topology}\label{sec:topology}

In this section, after reviewing the definition of topology in affine logic, we introduce notions of open and closed sets, basis, and product topology.

First, following \cite{shulman2022affine}, we review the notion of topology by the interior operator.
In addition, we consider a Moore operator~(\cite{schechter1996handbook}) and a \v{C}ech operator~(\cite{vcech1966topological}), both of which are weaker concepts than topology.
Since these conditions are axioms, we can use them arbitrarily many times even in affine logic.

\begin{definition}\label{def:operator}
Let $\inte : \P{X} \to \P{X}$ be an operator on subsets.
It is called a \textit{Moore interior operator} if the following conditions are satisfied:
\begin{enumerate}
\item[\textbf{I1}.] $\forall s: \P{X}, \inte s \subseteq s$,
\item[\textbf{I2}.] $\forall s,t: \P{X}, \left(s \subseteq t \linimp \inte s \subseteq \inte t\right)$,
\item[\textbf{I3}.] $\forall s: \P{X}, \inte s \subseteq \inte (\inte s)$.\label{enum:transitivity}
\end{enumerate}
An \textit{interior operator} is a Moore interior operator satisfying additionally the following conditions:
\begin{enumerate}
\item[\textbf{I4}.] $X \subseteq \inte X$,\label{enum:operator_univ}
\item[\textbf{I5}.] $\forall s,t: \P{X}, \inte s \mcap \inte t \subseteq \inte \left(s \acap t\right)$.\label{enum:operator_interaction}
\end{enumerate}
An operator is called a \textit{\v{C}ech interior operator} if it satisfies the conditions for interior operator except \textbf{I3}.
\end{definition}

As a classical definition, we introduce the definition of topology as a set of open sets.

\begin{definition}
A \textit{Moore collection of open subsets} is a subset $\O : \P{\P{X}}$ of the power set satisfying the following conditions:
\begin{enumerate}
\item[\textbf{O1}.] $\forall s,t: \P{X},\left(s\in \O \linimp s\subseteq t\mand t\subseteq s \linimp t \in \O\right)$.
\item[\textbf{O2}.] $\forall s: \P{X}, \Set{x:X}{\exists t :\P{X}, \left(t\in \O \mand t\subseteq s \mand x \in t \right)}\in\O$.
\end{enumerate}
A \textit{collection of open subsets} is a Moore collection of open subsets satisfying additionally the following conditions:
\begin{enumerate}
\item[\textbf{O3}.] $X \in \O$,
\item[\textbf{O4}.] $\forall s,t: \P{X}, \left(s \in \O \mand t \in \O \linimp \exists u : \P{X}, \left(u \in \O \mand s \mcap t \subseteq u \mand u \subseteq s \acap t\right)\right)$,
\end{enumerate}
\end{definition}

\textbf{O1} says that $\O$ is a 'subset' in the sense of \cite{shulman2022affine}.
\textbf{O2} corresponds to the condition that any union of open subsets is open. See Remark~\ref{remark:union_of_open_subsets}.
\textbf{O3} is the same as in the classical definition and \textbf{O4} corresponds to the condition that any intersection of two open sets is open.

\begin{proposition}\label{prop:topology_def}
Moore interior operators correspond one-to-one to Moore collections of open subsets, and interior operators correspond one-to-one to collections of open subsets.
\end{proposition}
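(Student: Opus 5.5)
The two translations are the obvious ones. Given a Moore interior operator $\inte$, define
\[
\O_{\inte} := \Set{s:\P{X}}{s\subseteq \inte s}.
\]
Conversely, given a Moore collection $\O$, define
\[
\inte_{\O} s := \Set{x:X}{\exists t:\P{X}, \left(t\in \O \mand t\subseteq s \mand x\in t\right)}.
\]
The plan has three steps: check that each construction lands in the right class, show the two round trips are identities, and then check that \textbf{I4}, \textbf{I5} correspond to \textbf{O3}, \textbf{O4}. Throughout I must respect the affine discipline. The axioms \textbf{I1}--\textbf{I5} and \textbf{O1}--\textbf{O4} may be used freely, but a hypothesis such as $t\in\O$ or $t\subseteq s$ may be used at most once. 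The round trips are to be read as equalities in the affine sense: mutual inclusion for $\inte$, and $s\in\O \linimp s\in\O'$ in both directions for $\O$ (so identity up to extensional equivalence, which \textbf{O1} makes harmless).

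First, $\O_{\inte}$ is a Moore collection. For \textbf{O1}, from $s\subseteq \inte s$, $s\subseteq t$ and $t\subseteq s$, I compose $t\subseteq s\subseteq \inte s\subseteq \inte t$, the last step by \textbf{I2} applied to $s\subseteq t$. Each hypothesis is used exactly once, which fits the tensor in the premise. For \textbf{O2}, write $u$ for the displayed union. Given $x$, $t\in\O_{\inte}$, $t\subseteq s$ and $x\in t$, I get $x\in \inte t$. I then want $t\subseteq u$ so as to apply \textbf{I2} and conclude $x\in\inte u$. The catch is that $t\subseteq u$ needs, for each $y\in t$, a witness triple $(t, t\in\O, t\subseteq s)$, which would consume the resources $t\in \O$ and $t\subseteq s$ already spent. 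I expect this to be the main obstacle.

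To get around it I will first show $\inte_{\O}$-style identities on the interior side. By \textbf{I3} and \textbf{I1}, $\inte s\in \O_{\inte}$. I will then prove the inclusion $u\subseteq \inte s$ directly: given $x\in t\subseteq \inte t$, apply \textbf{I2} to $t\subseteq s$ to get $x\in \inte s$. For the reverse inclusion $\inte s\subseteq u$, the witness is $t=\inte s$, using the axiom instances $\inte s\in \O_{\inte}$ (from \textbf{I3}) and $\inte s\subseteq s$ (from \textbf{I1}), which are reusable. So $u$ and $\inte s$ are extensionally equal, and by \textbf{I2} applied to each direction, $\inte u$ and $\inte(\inte s)$ are too. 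Then $u\subseteq \inte s\subseteq \inte\inte s\subseteq \inte u$, so $u\in\O_{\inte}$. The same identity $\inte_{\O_{\inte}}=\inte$ gives one round trip.

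Second, $\inte_{\O}$ is a Moore interior operator. \textbf{I1} is immediate. \textbf{I2} composes the hypothesis $s\subseteq t$ once into the witness. \textbf{I3} holds because \textbf{O2} says $\inte_{\O} s\in\O$, so $\inte_{\O} s$ itself witnesses membership of each of its points in $\inte_{\O}\inte_{\O} s$; here the witness facts are axiom instances, not linear hypotheses, so they can be reused.

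For the other round trip, I must show $s\in\O$ iff $s\subseteq \inte_{\O}s$. In the forward direction, each $x\in s$ is witnessed by $s$ itself. This uses $s\in\O$ once per point, which is permitted because $\forall x$ is additive and the premise sits outside it. In the backward direction, $s\subseteq\inte_{\O}s$ together with \textbf{I1} gives extensional equality of $s$ and $\inte_{\O}s$, and then \textbf{O2} and \textbf{O1} yield $s\in\O$.

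Finally the extra axioms. \textbf{I4} gives $X\in\O_{\inte}$; conversely \textbf{O3} makes $X$ a witness for $X\subseteq\inte_{\O}X$. For \textbf{I5} from \textbf{O4}: given $x\in t\subseteq s$ and $x\in t'\subseteq s'$ with $t,t'\in\O$, \textbf{O4} produces an open $u$ with $t\mcap t'\subseteq u\subseteq t\acap t'$. Combining with $t\subseteq s$ and $t'\subseteq s'$ gives $u\subseteq s\acap s'$, and $x\in u$ because $x\in t\mand x\in t'$. The one delicate point is that the composition $u\subseteq t\acap t'\subseteq s\acap s'$ may use both inclusions $t\subseteq s$ and $t'\subseteq s'$, since the additive conjunction $\acap$ lets us consume either branch. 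For \textbf{O4} from \textbf{I5}, take $u=\inte(s\acap t)$. It is open by \textbf{I3}, it is contained in $s\acap t$ by \textbf{I1}, and $s\mcap t\subseteq \inte s\mcap\inte t\subseteq u$ by \textbf{I5}.
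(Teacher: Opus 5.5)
Your proposal is correct and follows the paper's proof. You use the same two constructions, the same handling of \textbf{O2}, and the same round-trip arguments. For \textbf{O2} you prove $u\subseteq\inte s$ and $\inte s\subseteq u$ with $\inte s$ as a reusable witness, then compose $u\subseteq\inte s\subseteq\inte(\inte s)\subseteq\inte u$ directly where the paper invokes \textbf{O1}. For \textbf{O4} you make the same choice $u:=\inte(s\acap t)$.

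The only real variation is in deriving \textbf{I5} from \textbf{O4}. You apply \textbf{O4} pointwise to the witnesses $t,t'$ unpacked from $x\in\inte s\mand x\in\inte s'$. The paper instead applies it once to $\inte s$ and $\inte s'$ (open by \textbf{O2}) and concludes via $u\subseteq\inte s\acap\inte s'\subseteq s\acap s'$. Both respect the affine resource discipline.
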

\begin{proof}
From a Moore interior operator $\inte$, we can define $\O$ as
\begin{equation}\label{eq:open_def}
\O := \{s\mid s \subseteq \inte s\}.    
\end{equation}
Then, \textbf{O1} follows from \textbf{I2}.
We show \textbf{O2}. For any $s:\P{X}$, let \[s':=\Set{x:X}{\exists t :\P{X}, \left(t\in \O \mand t\subseteq s \mand x \in t \right)}.\]
Because \textbf{I3} means $\inte s \in \O$, we have $\inte s \subseteq s'$ by taking $\inte s$ as $t$.
Moreover, $s' \subseteq \inte s$ follows from \textbf{I2}. Thus, using $\inte s \in \O$ again, $s' \in \O$ is derived from \textbf{O1}.


Conversely, from a Moore collection $\O$ of open sets, we can define the interior operator as \begin{equation}\label{eq:interior_def}
\inte s := \Set{x:X}{\exists t :\P{X}, \left(t\in \O \mand t\subseteq s \mand x \in t \right)}.
\end{equation}
Then, \textbf{I1} and \textbf{I2} are trivial.
We can show that 
$s \in \O \linimp s\subseteq \inte s$ by taking $s$ itself as $t$. Since \textbf{O2} means $\inte s \in \O$, \textbf{I3} holds true.
Moreover, since $\inte s \subseteq s$ and $\inte s \in \O$, $s \subseteq \inte s \linimp s \in \O$ follows from \textbf{O1}. Thus, the constructions (\ref{eq:open_def}) and (\ref{eq:interior_def}) are inverse to each other.

We prove that interior operators correspond to collections of open sets via this correspondence.
Clearly, \textbf{O3} is equivalent to \textbf{I4}. \textbf{O4} can be proven from \textbf{I5} by taking $u:=\inte \left(s\acap t\right)$.
We show \textbf{I5} from \textbf{O4}. Since $\inte s \in \O$ and $\inte t \in \O$, we can take $u:\P{X}$ such that \[u \in \O\mand \inte s \mcap \inte t \subseteq u \mand u \subseteq \inte s \acap \inte t.\]
Since $\inte s \acap \inte t \subseteq s \acap t$, we have $u \subseteq \inte \left(s \acap t\right)$ from the first and third conditions. Thus, \textbf{I5} holds.

\end{proof}

We define a \textit{topology} on $X$ as a structure defined by an interior operator, or equivalently, by a collection of open subsets.

For an operator $\inte: \P{X}\to \P{X}$, we define the dual operator $\clo: \P{X}\to \P{X}$ as $\clo s := \compl{(\inte \compl{s})}$.
If $\inte$ is a (Moore / \v{C}ech) interior operator, $\clo$ is called \textit{(Moore / \v{C}ech) closure operator}.
Moreover, if $\inte$ is a (Moore) interior operator, the \textit{(Moore) collection of closed subsets} is defined as $\mathcal{C} := \{s : \P{X} \mid \compl{s} \in \O\}$.\footnote{Indeed, the term 'Moore collection' usually refers to Moore collections of closed subsets, and Moore closures are considered instead of interiors. However, we follow the convention that topology is often defined using open subsets, and that \cite{shulman2022affine} defines topology using interior.}
An element in $\mathcal{O}$ is called \textit{open} and that in $\mathcal{C}$ is called \textit{closed}.
Axioms for $\clo$ or $\mathcal{C}$ can be written down, and
these notions also determine the topology.


\begin{remark}\label{remark:union_of_open_subsets}
Let $X$ be a type equipped with a Moore operator.
For an indexed family of subsets $\mathcal{S} : A \to \P{X}$, if all members are open, their union is also open, because
\[\bigcup_{\alpha:A} \mathcal{S}(\alpha) \subseteq\bigcup_{\alpha:A} \inte \mathcal{S}(\alpha)\subseteq \inte \bigcup_{\alpha:A} \mathcal{S}(\alpha).\]
However, for a subset of the power set $\mathcal{S}: \P{\P{X}}$, it is required that $\mathcal{S}\subseteq \mathcal{S}\mcap \O$, which holds when $\mathcal{S}\subseteq\O$ and $\mathcal{S}$ is affirmative ($\mathcal{S}\subseteq \subsetoc{\mathcal{S}}$), to prove \[\Set{x:X}{\exists s : \P{X}, s\in \mathcal{S} \mand x \in s}\in \O,\]
because both $s\in \mathcal{S}$ and $s\in \mathcal{O}$ are needed to prove
\[
s\subseteq \inte\Set{x:X}{\exists s : \P{X}, s\in \mathcal{S} \mand x \in s}.
\]
For this reason, we prefer indexed families of subsets to subsets of subsets.
\end{remark}

Next, we treat the determination of topology by basis.
\begin{definition}
An indexed family $\mathcal{B}:\mathcal{I}\to \P{X}$ of subsets is called a \textit{basis} if
\[\forall x:X, \exists \iota:\mathcal{I}, x\in\mathcal{B}(\iota)\]
and
\begin{align*}
\forall x:X,\iota_0,\iota_1:\mathcal{I}, (&x\in\mathcal{B}(\iota_0)\mand x\in\mathcal{B}(\iota_1)\linimp \\
&\exists \iota_2:\mathcal{I}, \left(x\in\mathcal{B}(\iota_2)\mand\left(\mathcal{B}(\iota_2)\subseteq\mathcal{B}(\iota_0) \aand \mathcal{B}(\iota_2)\subseteq\mathcal{B}(\iota_1)\right)\right)).
\end{align*}
\end{definition}
\begin{proposition}
For an indexed family $\mathcal{B}:\mathcal{I}\to \P{X}$ of subsets, we define an operator $\inte:\P{X}\to\P{X}$ as
\[
\inte s := \Set{x:X}{\exists \iota:\mathcal{I}, x\in \mathcal{B}(\iota)\mand \mathcal{B}(\iota)\subseteq s}.
\]
Then, $\inte$ is a Moore interior operator.
Moreover, if $\mathcal{B}$ is a basis, $\inte$ is an interior operator.
\end{proposition}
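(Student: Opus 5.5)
We verify the axioms of Definition~\ref{def:operator} directly from the definition of $\inte$, paying attention to how many times each hypothesis is used. The axioms themselves (and the basis conditions) are stated under $\forall$ at the top level, so they may be reused freely. Inside a single derivation, however, each hypothesis such as $x\in\mathcal{B}(\iota)$ or $\mathcal{B}(\iota)\subseteq s$ is available only once.

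\textbf{I1}: from $x\in\mathcal{B}(\iota)\mand\mathcal{B}(\iota)\subseteq s$, apply the inclusion (instantiated at $x$) to $x\in\mathcal{B}(\iota)$ to get $x\in s$.

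\textbf{I2}: assume $s\subseteq t$ and $x\in\inte s$, i.e.\ $\iota$ with $x\in\mathcal{B}(\iota)\mand\mathcal{B}(\iota)\subseteq s$. Compose the two inclusions, using $s\subseteq t$ once, to get $\mathcal{B}(\iota)\subseteq t$; the same $\iota$ witnesses $x\in\inte t$.

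\textbf{I3}: given $x\in\mathcal{B}(\iota)\mand\mathcal{B}(\iota)\subseteq s$, take the same $\iota$. It remains to show $\mathcal{B}(\iota)\subseteq\inte s$, i.e.\ for $y\in\mathcal{B}(\iota)$ we need $y\in\inte s$, witnessed again by $\iota$. This needs $\mathcal{B}(\iota)\subseteq s$ once more, and in affine logic this duplication is the main obstacle. I would avoid it by strengthening the intermediate goal. The first attempt is to note that $\mathcal{B}(\iota)\subseteq s$ yields $\mathcal{B}(\iota)\subseteq\Set{y:X}{y\in\mathcal{B}(\iota)\mand\mathcal{B}(\iota)\subseteq s}$ only if we can copy it, which is exactly what is not available. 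So instead I would show \[\mathcal{B}(\iota)\subseteq s\linimp\mathcal{B}(\iota)\subseteq\inte s\] by proving, for each $y$, that $y\in\mathcal{B}(\iota)$ together with $\mathcal{B}(\iota)\subseteq s$ gives $y\in\inte s$. Since $\subseteq$ is an additive $\forall$ over $y$, the hypothesis $\mathcal{B}(\iota)\subseteq s$ is consumed once inside the proof for a generic $y$. This is the standard $\forall$-introduction, because the hypothesis does not depend on $y$. So after using $\mathcal{B}(\iota)\subseteq s$ to prove the inclusion, the witness for $x\in\inte(\inte s)$ is $\iota$ with $x\in\mathcal{B}(\iota)$ and $\mathcal{B}(\iota)\subseteq\inte s$. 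Each resource is used once, and the step goes through.

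For the basis case:

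\textbf{I4}: for $x:X$, the first basis axiom gives $\iota$ with $x\in\mathcal{B}(\iota)$, and $\mathcal{B}(\iota)\subseteq X$ holds trivially since $y\in X$ is $\atrue$.

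\textbf{I5}: from $x\in\inte s\mand x\in\inte t$ we get $\iota_0,\iota_1$ with
\[x\in\mathcal{B}(\iota_0)\mand\mathcal{B}(\iota_0)\subseteq s\mand x\in\mathcal{B}(\iota_1)\mand\mathcal{B}(\iota_1)\subseteq t.\]
Feed $x\in\mathcal{B}(\iota_0)\mand x\in\mathcal{B}(\iota_1)$ into the second basis axiom to obtain $\iota_2$ with $x\in\mathcal{B}(\iota_2)$ and $\mathcal{B}(\iota_2)\subseteq\mathcal{B}(\iota_0)\aand\mathcal{B}(\iota_2)\subseteq\mathcal{B}(\iota_1)$. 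We then need $\mathcal{B}(\iota_2)\subseteq s\acap t$, i.e.\ for each $y\in\mathcal{B}(\iota_2)$, both $y\in s$ and $y\in t$ additively. Additive conjunction lets the same context be used in each branch. In the $s$-branch, project the $\aand$ to $\mathcal{B}(\iota_2)\subseteq\mathcal{B}(\iota_0)$ and compose with $\mathcal{B}(\iota_0)\subseteq s$, discarding $\mathcal{B}(\iota_1)\subseteq t$ by affineness. The $t$-branch is symmetric. The $\aand$ in the basis axiom is exactly what makes this work with a single copy of $y\in\mathcal{B}(\iota_2)$. I expect this to be routine once the resource bookkeeping is tracked, with I3 the only step requiring care.
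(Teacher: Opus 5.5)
Your proof is correct and follows the paper's argument. \textbf{I1}, \textbf{I2} and \textbf{I4} are direct. \textbf{I3} reuses the witness $\iota$ via $\mathcal{B}(\iota)\subseteq s \linimp \mathcal{B}(\iota)\subseteq\inte s$, which is exactly the paper's step. \textbf{I5} feeds $x\in\mathcal{B}(\iota_0)\mand x\in\mathcal{B}(\iota_1)$ into the basis axiom and uses the $\aand$ to handle the two branches of $s\acap t$. The duplication worry you raise in \textbf{I3} never actually arises: $\mathcal{B}(\iota)\subseteq s$ is used only once, inside the generic-$y$ proof of $\mathcal{B}(\iota)\subseteq\inte s$, as you yourself conclude.
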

\begin{proof}
The conditions \textbf{I1} and \textbf{I2} are clear.

We show \textbf{I3}. We assume $x \in \inte s$. We can take $\iota: \mathcal{I}$ such that $x \in \mathcal{B}(\iota) \mand \mathcal{B}(\iota) \subseteq s$. Then, since
\[\mathcal{B}(\iota) \subseteq s \linimp y \in \mathcal{B}(\iota) \linimp y \in \inte s,\]
$\mathcal{B}(\iota) \subseteq \inte s$. Thus, $x \in \inte \left(\inte s\right)$.

The condition \textbf{I4} follows immediately from the first condition for basis.

We show \textbf{I5} assuming the second condition for basis. We assume $x \in \inte s \mand x \in \inte t$.
We can take $\iota_0,\iota_1: \mathcal{I}$ such that $x \in \mathcal{B}(\iota_0) \mand \mathcal{B}(\iota_0) \subseteq s$ and $x \in \mathcal{B}(\iota_1) \mand \mathcal{B}(\iota_1) \subseteq t$.
From $x \in \mathcal{B}(\iota_0) \mand x \in \mathcal{B}(\iota_1)$, we can take $\iota_2 : \mathcal{I}$ as in the condition. Because
\[\mathcal{B}(\iota_0) \subseteq s \mand \mathcal{B}(\iota_1) \subseteq t \mand \left(\mathcal{B}(\iota_2)\subseteq\mathcal{B}(\iota_0) \aand \mathcal{B}(\iota_2)\subseteq\mathcal{B}(\iota_1)\right)\linimp \mathcal{B}(\iota_2) \subseteq s \acap t,\]
we have $x \in \inte (s \acap t)$.
\end{proof}


\begin{remark}
    For a similar reason to Example 10.3 in \cite{shulman2022affine}, it is necessary for the above proof that the conjunction in the left-hand side of \textbf{I5} is multiplicative and that in the right-hand side is additive.
\end{remark}

We consider the product of two spaces.
For two spaces $X,Y$ and subsets $s_X : \P{X}$, $s_y:\P{Y}$, we define the product subsets in two ways as
\[s_X\atimes s_Y := \Set{(x,y):X\times Y}{x \in s_X \aand y \in s_Y},\]
\[s_X\mtimes s_Y := \Set{(x,y):X\times Y}{x \in s_X \mand y \in s_Y}.\]

\def\minte{\inte^{\mand}}
\def\ainte{\inte^{\aand}}

\begin{proposition}
For two spaces $X$ and $Y$ with operators, let $\minte : \P{X\times Y} \to \P{X \times Y}$ as
\[
\minte s := \Set{z:X\times Y}{\exists u:\P{X}, v:\P{Y}, \left(z \in \inte u \mtimes \inte v \mand u \mtimes v \subseteq s\right)}.
\]
Then,
\begin{itemize}
    \item If both $X$ and $Y$ satisfy \textbf{I1}, $\minte$ also satisfies it.
    \item $\minte$ always satisfies \textbf{I2}.
    \item If both $X$ and $Y$ satisfy \textbf{I3}, $\minte$ also satisfies it.
    \item If both $X$ and $Y$ satisfy \textbf{I4}, $\minte$ also satisfies it.
    \item If both $X$ and $Y$ satisfy \textbf{I5}, $\minte$ also satisfies it.
\end{itemize}
Therefore, if $X$ and $Y$ are topological spaces, $X\times Y$ with $\minte$ is also a topological space.
\end{proposition}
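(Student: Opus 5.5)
I would verify the five items one at a time, each time unfolding the definition of $\minte$. Throughout, the existential witnesses $u,v$ are used linearly: the hypothesis $z\in\minte s$ yields $u,v$ together with $z\in \inte u\mtimes\inte v$ and $u\mtimes v\subseteq s$, each of which may be consumed once. The axioms \textbf{I1}--\textbf{I5} for $X$ and $Y$ are reusable.

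For \textbf{I1}, take $z=(x,y)$ with $x\in\inte u\mand y\in\inte v$ and $u\mtimes v\subseteq s$. Apply \textbf{I1} on each factor to get $x\in u\mand y\in v$. Then instantiate the inclusion at $(x,y)$ to get $z\in s$.

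For \textbf{I2}, assume $s\subseteq t$ and $z\in\minte s$. Keep the same $u,v$ and compose $u\mtimes v\subseteq s$ with $s\subseteq t$. This uses each hypothesis once, so it is fine affinely.

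For \textbf{I4}, take $u:=X$ and $v:=Y$. Then \textbf{I4} on each factor gives $x\in\inte X\mand y\in\inte Y$, and $X\mtimes Y\subseteq X\times Y$ holds trivially, since the target is $\atrue$ and weakening is allowed.

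For \textbf{I3}, assume $z\in\minte s$ with witnesses $u,v$. I would use the witnesses $\inte u,\inte v$ for $\minte(\minte s)$. The first component $x\in\inte(\inte u)\mand y\in\inte(\inte v)$ comes from \textbf{I3} on each factor. The remaining obligation is $\inte u\mtimes\inte v\subseteq\minte s$. For $(x',y')$ with $x'\in\inte u\mand y'\in\inte v$, I witness $\minte s$ with the original $u,v$, supplying $u\mtimes v\subseteq s$. This is the one subtle point: the inclusion $u\mtimes v\subseteq s$ is consumed inside the proof of a $\forall$-inclusion. That is legitimate because it is a premise of the implication, as in the proof of \textbf{I3} for bases. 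The statement $\inte u\mtimes\inte v\subseteq\minte s$ is then proved once from that single hypothesis, and it in turn may only be used once, which is all we need.

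For \textbf{I5}, which I expect to be the main obstacle, assume $z\in\minte s\mand z\in\minte t$. Writing $z=(x,y)$, we obtain $u_1,v_1,u_2,v_2$ with
$x\in\inte u_1\mand y\in\inte v_1\mand x\in\inte u_2\mand y\in\inte v_2\mand u_1\mtimes v_1\subseteq s\mand u_2\mtimes v_2\subseteq t$.
Rearrange the tensors and apply \textbf{I5} in $X$ and in $Y$ to get $x\in\inte(u_1\acap u_2)\mand y\in\inte(v_1\acap v_2)$. So I take the witnesses $u:=u_1\acap u_2$ and $v:=v_1\acap v_2$. It remains to prove $(u_1\acap u_2)\mtimes(v_1\acap v_2)\subseteq s\acap t$ from the two inclusion hypotheses. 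For $x'\in u_1\acap u_2\mand y'\in v_1\acap v_2$, the goal is additive, so I prove both components from the same context, which is allowed for $\aand$. For the $s$-component, project $x'\in u_1$ and $y'\in v_1$ and use $u_1\mtimes v_1\subseteq s$, discarding the other inclusion by weakening; the $t$-component is symmetric.

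The key check is that the additive conjunction on the right of \textbf{I5} is exactly what permits sharing the context. The final sentence of the statement then follows by combining the five items.
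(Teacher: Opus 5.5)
Your proposal is correct and follows the paper's proof essentially step for step. For \textbf{I3} you use the same witnesses $\inte u$, $\inte v$, deriving $\inte u\mtimes\inte v\subseteq\inte^{\mand} s$ from the single hypothesis $u\mtimes v\subseteq s$; for \textbf{I5} you use the same witnesses $u_1\acap u_2$, $v_1\acap v_2$ and close with the same affine-valid inclusion into $s\acap t$, differing only in that you spell out \textbf{I1}, \textbf{I2}, \textbf{I4}, which the paper calls immediate.
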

\begin{proof}
The conditions \textbf{I1}, \textbf{I2}, and \textbf{I4} are immediate.

We show \textbf{I3}.
Let $z \in \minte s$.
We can take $u:\P{X},\,v:\P{Y}$ such that $z \in\inte u \mtimes \inte v$ and $u \mtimes v \subseteq s$.
Then, from $u \mtimes v \subseteq s$ and the definition, $\inte u \mtimes \inte v \subseteq \minte s$. Since $z \in \inte(\inte u) \mtimes \inte(\inte v)$ by the assumption, we have $z \in \minte(\minte s)$.

We show \textbf{I5}.
We assume $z\in \minte s \mcap \minte t$. We can take $u,u':\P{X}$ and $v,v':\P{Y}$ such that
\[
z\in \left(\inte u \mtimes \inte v\right)\mcap \left(\inte u'\mtimes \inte v'\right)\mand u\mtimes v \subseteq s \mand u'\mtimes v' \subseteq t.
\]
By the assumption,
\begin{align*}
\left(\inte u \mtimes \inte v\right)\mcap \left(\inte u'\mtimes \inte v'\right)
&\equiv (\inte u \mcap \inte u') \mtimes (\inte v \mcap \inte v')\\ &\subseteq \inte (u \acap u') \mtimes \inte (v \acap v'). 
\end{align*}
Thus, it is enough to show
\[
u \mtimes v \subseteq s \mand u' \mtimes v' \subseteq t \linimp
(u \acap u') \mtimes (v \acap v') \subseteq s \acap t.
\]
This is true in affine logic.
\end{proof}


While this definition is natural, 
we could not prove the proposition that the product of any two compact subsets is compact (Proposition~\ref{prop:compactness_product}) for this topology.
Thus, we introduce another definition.

\begin{proposition}
For two spaces $X$ and $Y$ with operators, let $\ainte : \P{X\times Y} \to \P{X \times Y}$ as
\[
\ainte s := \Set{z:X\times Y}{\exists u:\P{X},\, v:\P{Y},\, z \in \inte u \mtimes \inte v \mand u \atimes v \subseteq s}.
\]
Then,
\begin{itemize}
    \item If both $X$ and $Y$ satisfy \textbf{I1}, $\ainte$ also satisfies it.
    \item $\ainte$ always satisfies \textbf{I2}.
    \item If both $X$ and $Y$ satisfy \textbf{I4}, $\ainte$ also satisfies it.
    \item If both $X$ and $Y$ satisfy \textbf{I5}, $\ainte$ also satisfies it.
\end{itemize}
Therefore, if the operators for $X$ and $Y$ are \v{C}ech operators, $\ainte$ is also a \v{C}ech operator.
\end{proposition}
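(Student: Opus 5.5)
The plan mirrors the proof of the previous proposition for $\minte$, checking each of the four conditions separately and noting where the switch from $u\mtimes v\subseteq s$ to $u\atimes v\subseteq s$ matters. \textbf{I3} is deliberately not claimed, which suggests it is where the multiplicative/additive mismatch breaks down. Since a \v{C}ech operator is one satisfying \textbf{I1}, \textbf{I2}, \textbf{I4}, \textbf{I5}, the final sentence follows at once from the four bullet points.

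For \textbf{I1}, assume $z=(x,y)\in\ainte s$ and obtain $u,v$ with $x\in\inte u\mand y\in\inte v$ and $u\atimes v\subseteq s$. By \textbf{I1} for $X$ and $Y$ we get $x\in u\mand y\in v$. This yields $x\in u\aand y\in v$, since a tensor implies the additive conjunction in affine logic: from $A\mand B$ we may weaken to either $A$ or $B$, and $\aand$ shares its context. Hence $z\in u\atimes v\subseteq s$.

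For \textbf{I2}, assume $s\subseteq t$ and $z\in\ainte s$ with witnesses $u,v$. Composing $u\atimes v\subseteq s$ with $s\subseteq t$ gives $u\atimes v\subseteq t$, with no hypothesis on $X,Y$. For \textbf{I4}, take $u:=X$ and $v:=Y$. By \textbf{I4} on both factors, $x\in\inte X\mand y\in\inte Y$, and $X\atimes Y\subseteq X\times Y$ holds trivially.

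The main step is \textbf{I5}. Assume $z\in\ainte s\mcap\ainte t$. Unpacking gives $u,u',v,v'$ with
\[(x\in\inte u\mand y\in\inte v)\mand(x\in\inte u'\mand y\in\inte v')\mand u\atimes v\subseteq s\mand u'\atimes v'\subseteq t.\]
Regroup the first part as $(x\in\inte u\mand x\in\inte u')\mand(y\in\inte v\mand y\in\inte v')$. Then \textbf{I5} on $X$ and $Y$ gives $x\in\inte(u\acap u')\mand y\in\inte(v\acap v')$. It remains to prove
\[u\atimes v\subseteq s\mand u'\atimes v'\subseteq t\linimp (u\acap u')\atimes(v\acap v')\subseteq s\acap t.\]
Given $x\in u\acap u'$ and $y\in v\acap v'$, the additive conjunction $(x\in u\acap u')\aand(y\in v\acap v')$ lets us project to $x\in u\aand y\in v$ and feed it to the first hypothesis to obtain $s$. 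Separately, within the same $\aand$-branch structure, we project to $x\in u'\aand y\in v'$ and use the second hypothesis to obtain $t$. Both hypotheses sit in the context, and each $\aand$ branch uses only one of them, discarding the other by affineness. So the conclusion $s\acap t$ is derivable. I expect this affine bookkeeping to be the only point needing care. It works because the product is additive both in the hypotheses and in the conclusion, so a single copy of $x,y$ membership can be shared across the two branches of $\aand$.
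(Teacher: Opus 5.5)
Your proposal is correct and follows the paper's proof. You obtain \textbf{I1} from $u\mtimes v\subseteq u\atimes v$ and get \textbf{I2} and \textbf{I4} directly; for \textbf{I5} you apply \textbf{I5} factorwise and then check the affine implication $u\atimes v\subseteq s\mand u'\atimes v'\subseteq t\linimp(u\acap u')\atimes(v\acap v')\subseteq s\acap t$, which you work out in more detail than the paper, where it is simply stated as true.
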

\begin{proof}
The conditions \textbf{I2} and \textbf{I4} are immediate.

\textbf{I1} follows from $u\mtimes v \subseteq u \atimes v$.

We show \textbf{I5}. By the same argument as the previous proof, it is enough to show
\[
u \atimes v \subseteq s \mand u' \atimes v' \subseteq t \linimp
(u \acap u') \atimes (v \acap v') \subseteq s \acap t.
\]
This is also true in affine logic.
\end{proof}
\begin{remark}
To prove \textbf{I3} for $\ainte$ by a similar argument to that for $\minte$, it is necessary to show $\inte u \atimes \inte v \subseteq \ainte s$ for $u:\P{X},v:\P{Y}$ such that $u \atimes v \subseteq s$. However, we can only prove $\inte u \mtimes \inte v \subseteq \ainte s$.
\end{remark}
\section{Compactness}\label{sec:compactness}
In this section, we define an affine version of compactness and prove its basic properties.
We use the notions of filter and cluster points to define compactness. Compared with the definition via open coverings, this definition has the advantage that we do not have to quantify index types.
Note that, our definitions apply not only to topological spaces but to general operators.

First, we define the notion of filters.


\def\PFil#1{\mathbf{Fil}\left(#1\right)}

\begin{definition}\label{def:filter}
\def\F{\mathcal{F}}
A subset of the power set $\F: \P{\P{X}}$ is called a \textit{filter on $X$} if it is monotonic, non-empty, and closed under multiplicative intersection with $\ofcourse$-modality.
\[
\PFil{\F}:=\ofcourse{\left(\begin{array}{l}\left(\forall s,t : \P{X}, \left(s\in \F \linimp s \subseteq t\linimp t\in \F\right)\right) \mand X \in \F \\ 
 \mand \forall s,t : \P{X}, \left(s\in \F \mand t \in \F \linimp s \mcap t\in \F\right)
 \end{array}\right)}.
\]
\end{definition}

This definition is justified by the following lemma.

\def\Cov{\mathbf{Cov}}

\begin{lemma}\label{lemma:isFilter}
Let $A$ be any type.
For any indexed family $U:A \to \P{X}$ of subsets,
\begin{equation}\label{eq:isFilter}
\PFil{\Set{s:\P{X}}{\exists n : \N, \exists F :[n]\to A, \compl{s} \subseteq \bigmcup_{i:[n]}U_{F(i)}}}.
\end{equation}
\end{lemma}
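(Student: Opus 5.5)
We have to prove an $\ofcourse$-formula, so the plan is to use $\ofcourse$-promotion. The context is empty: $U$ and $A$ are parameters, not hypotheses. Hence it suffices to prove the tensor of the three conjuncts without any resources, and then apply promotion. Write $\F$ for the subset in (\ref{eq:isFilter}), so that $s\in\F$ means $\exists n:\N,\exists F:[n]\to A,\ \compl{s}\subseteq\bigmcup_{i:[n]}U_{F(i)}$. I will prove each conjunct separately from no hypotheses and combine them by $\mand$-introduction. Since each is proved from the empty context, using them multiplicatively causes no resource problem.

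\textbf{Monotonicity.} Assume $s\in\F$ and $s\subseteq t$. Unpack $n$, $F$ and the hypothesis $h:\compl{s}\subseteq\bigmcup_{i}U_{F(i)}$, and reuse the same $n,F$. For $x$ with $x\notin t$, i.e.\ $\not{(x\in t)}$, instantiate $s\subseteq t$ at $x$ and take the contrapositive (valid in affine logic: $(P\linimp Q)\linimp(\not Q\linimp\not P)$) to get $x\notin s$. Then apply $h$ at $x$. Each hypothesis is used exactly once, so this is linear.

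\textbf{Nonemptiness.} For $X\in\F$, take $n=0$ and $F$ the empty function. Then $\compl{X}=\Set{x}{\not\atrue}$, and $\bigmcup_{i:[0]}U_{F(i)}=\emptyset$, i.e.\ $\mfalse$ pointwise. We need $\not\atrue\linimp\mfalse$. Since $\not\atrue$ is $\atrue\linimp\mfalse$, this follows by applying it to $\atrue$.

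\textbf{Closure under $\mcap$.} This is the main step. From $s\in\F\mand t\in\F$, unpack $n,F,h_s$ and $m,G,h_t$. Take $n+m$ and the concatenation $H:[n+m]\to A$, with $H(i)=F(i)$ for $i<n$ and $H(n+j)=G(j)$. Then show, by induction on $n$, the lemma $\bigmcup_{i:[n+m]}U_{H(i)}\equiv\bigmcup_{i:[n]}U_{F(i)}\mcup\bigmcup_{j:[m]}U_{G(j)}$, using associativity and unit laws of $\mor$. Next consider $x\in\compl{(s\mcap t)}$, i.e.\ $\not{(x\in s\mand x\in t)}$. In affine (indeed linear) logic this is equivalent to $x\notin s\mor x\notin t$ by De Morgan duality, since negation is involutive in the classical-style affine logic of \cite{shulman2022affine}. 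Applying $h_s$ and $h_t$ on the two sides of the $\mor$ uses the functoriality of $\mor$, namely $(P\linimp P')\mand(Q\linimp Q')\linimp(P\mor Q\linimp P'\mor Q')$. Each of $h_s$ and $h_t$ is consumed once for the given $x$, which is why $h_s,h_t$ must be instantiated at $x$ inside the $\forall x$ introduction. This is legitimate because $\forall x$ is additive, so we first introduce $x$ and then specialize both hypotheses.

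I expect the only real obstacle to be the bookkeeping of finite $\bigmcup$ under concatenation, together with the De Morgan step. Both are routine once the inductive definition of $\bigmor$ is fixed.
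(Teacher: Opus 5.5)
Your proposal is correct and is essentially the paper's own argument. The paper just says ``Clear'', and you carry out the routine verification it leaves implicit: promotion from the empty context, reusing the witness $n,F$ for monotonicity, $n=0$ for $X\in\mathcal{F}$, and, for closure under $\mcap$, concatenating the index lists together with $\not{(P\mand Q)}\equiv\not{P}\mor\not{Q}$ and functoriality of $\mor$.
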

\begin{proof}
    Clear.
\end{proof}

The definition of compactness is as follows.

\def\MCptd{\mathbf{Cptd}}
\def\MCpt{\mathbf{Cpt}}

\begin{definition}
\def\F{\mathcal{F}}
Let $X$ be a type equipped with an operator $\inte: \P{X} \to \P{X}$.
For a filter $\F$ in $X$, a point $x : X$ is called a \textit{cluster point} of $\F$ if it is contained in the closure of any element of $\F$.
A subset $s:\P{X}$ is \textit{compact} if, for any filter $\F$ such that $\compl{s}\notin \F$, there exists a point in $s$ which is a cluster point with $\ofcourse$-modality:
\begin{align*}
\mathbf{Clst}(\F, x) &:= \forall t : \P{X}, \left(t \in \F \linimp x \in \clo t\right),\\
\MCpt(s) &:= \forall \F : \P{\P{X}}, (\PFil{\F} \linimp \compl{s} \notin \F
\linimp \exists x : X, \left(x \in s \mand \ofcourse{\mathbf{Clst}(\F, x)})\right).
\end{align*}
\end{definition}

    

\begin{proposition}
Compactness is invariant under affine equivalence:
\[
\MCpt(s) \linimp s\subseteq t \mand t \subseteq s \linimp \MCpt(t).
\]
\end{proposition}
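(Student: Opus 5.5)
We assume $\MCpt(s)$ together with $s\subseteq t \mand t\subseteq s$, and we must prove $\MCpt(t)$. The resource accounting is the delicate part. $\MCpt(s)$ is a universally quantified statement, so in affine logic it may be instantiated only once. Likewise, each inclusion hypothesis is a single resource that we may use at most once. The plan is to spend $t\subseteq s$ on translating the hypothesis $\compl{t}\notin\F$ into $\compl{s}\notin\F$, and to spend $s\subseteq t$ on translating the conclusion $x\in s$ into $x\in t$.

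Fix a filter $\F$ with $\PFil{\F}$ and assume $\compl{t}\notin\F$. First we derive $\compl{s}\notin\F$, that is, $\compl{s}\in\F\linimp\mfalse$. Suppose $\compl{s}\in\F$. From $t\subseteq s$ we get $\compl{s}\subseteq\compl{t}$ by contraposition: given $x\notin s$ and $x\in t$, apply the inclusion to obtain $x\in s$, which contradicts $x\notin s$. This step uses the single copy of $t\subseteq s$ only once.

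Since $\PFil{\F}$ is a $\ofcourse$-proposition, we may freely extract its monotonicity clause. Applying it to $\compl{s}\in\F$ and $\compl{s}\subseteq\compl{t}$ gives $\compl{t}\in\F$. This contradicts $\compl{t}\notin\F$, so $\compl{s}\notin\F$ holds.

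Next, we instantiate $\MCpt(s)$ at $\F$, feeding it $\PFil{\F}$ and $\compl{s}\notin\F$. Because $\PFil{\F}$ is a $\ofcourse$-proposition, it can be duplicated, so using it both here and in the monotonicity step above costs nothing. This instantiation yields some $x$ with $x\in s\mand\ofcourse{\mathbf{Clst}(\F,x)}$. Applying $s\subseteq t$ to $x\in s$ gives $x\in t$. We then return the same witness $x$ with $x\in t\mand\ofcourse{\mathbf{Clst}(\F,x)}$, which proves $\MCpt(t)$.

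Each linear hypothesis is used exactly once, so the argument is valid in affine logic. The only point needing care is the contraposition $\compl{s}\subseteq\compl{t}$, but it is a routine affine derivation.
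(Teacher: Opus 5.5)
Your proof is correct, and it is the unfolding the paper has in mind when it calls the statement obvious from the definition. The hypothesis $t\subseteq s$, contraposed and combined with the monotonicity clause of $\PFil{\mathcal{F}}$, turns $\compl{t}\notin\mathcal{F}$ into $\compl{s}\notin\mathcal{F}$. The hypothesis $s\subseteq t$ then transports the witness from $s$ to $t$. Your resource accounting is also right: $\MCpt(s)$ and each inclusion are used once, and only the $\ofcourse$-guarded $\PFil{\mathcal{F}}$ is used twice.
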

\begin{proof}
Obvious from the definition.
\end{proof}
\begin{proposition}\label{prop:compact has finite covering}
For an indexed family $U: A \to \P{X}$ of subsets,
\[
\MCpt(s)\linimp s \subseteq \subsetwn{\left(\bigcup_{\alpha : A} \inte U_{\alpha}\right)} \linimp \exists n : \N, \exists F :[n]\to A, s\subseteq \bigmcup_{i:[n]}U_{F(i)}.
\]
\end{proposition}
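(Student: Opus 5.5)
The plan is to apply $\MCpt(s)$ to the filter given by Lemma~\ref{lemma:isFilter} for the family $U$, and to use involutive negation of affine logic to read compactness contrapositively. Set
\[
\mathcal{F} := \Set{t:\P{X}}{\exists n : \N, \exists F :[n]\to A, \compl{t} \subseteq \bigmcup_{i:[n]}U_{F(i)}}.
\]
Lemma~\ref{lemma:isFilter} gives $\PFil{\mathcal{F}}$, which is a $\ofcourse$-formula and so may be used freely. The goal reduces to $\compl{s}\in\mathcal{F}$. Indeed, that yields $n$ and $F$ with $\compl{(\compl{s})}\subseteq\bigmcup_{i:[n]}U_{F(i)}$. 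Since $s\subseteq\compl{(\compl{s})}$ (double negation introduction), we obtain $s\subseteq\bigmcup_{i:[n]}U_{F(i)}$.

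To get $\compl{s}\in\mathcal{F}$, I will use that negation is involutive, so that $P\linimp \not{Q}\linimp R$ is equivalent to $P\linimp \not{R}\linimp Q$. From $\MCpt(s)$ and $\PFil{\mathcal{F}}$ we then obtain
\[
\not{\left(\exists x:X, x\in s\mand\ofcourse{\mathbf{Clst}(\mathcal{F},x)}\right)}\linimp \compl{s}\in\mathcal{F}.
\]
Thus it suffices to prove $\forall x:X,\ x\in s\linimp \whynot{\not{\mathbf{Clst}(\mathcal{F},x)}}$, which is the dual of the displayed antecedent.

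Fix $x$ with $x\in s$. The covering hypothesis gives $\whynot{(\exists \alpha:A, x\in\inte U_\alpha)}$. Because $\whynot$ is functorial along \emph{closed} entailments (an entailment $\vdash P\linimp Q$ with empty context yields $\whynot P\linimp\whynot Q$), it is enough to prove, without extra hypotheses, that
\[
\exists\alpha:A,\ x\in\inte U_\alpha \;\linimp\; \not{\mathbf{Clst}(\mathcal{F},x)}.
\]
Here $\not{\mathbf{Clst}(\mathcal{F},x)}$ is equivalent to $\exists t:\P{X},\ t\in\mathcal{F}\mand x\notin\clo t$. Given $\alpha$, take $t:=\compl{U_\alpha}$. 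Then $t\in\mathcal{F}$ with $n=1$ and $F(0)=\alpha$, since $\compl{(\compl{U_\alpha})}\subseteq U_\alpha$ by involutivity. Moreover, $x\notin\clo t$ means $\not{\not{(x\in\inte\compl{(\compl{U_\alpha})})}}$, which is just $x\in\inte\compl{(\compl{U_\alpha})}$. This follows from $x\in\inte U_\alpha$ by \textbf{I2}, applied to $U_\alpha\subseteq\compl{(\compl{U_\alpha})}$; \textbf{I2} is an axiom and so can be used freely. The membership $t\in\mathcal{F}$ uses no resources, and $x\in\inte U_\alpha$ is consumed exactly once, so the argument is affine-correct.

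The main obstacle I expect is bookkeeping with the modalities. First, the inner entailment must be proved in empty context so that $\whynot$-functoriality applies; this is why $\PFil{\mathcal{F}}$ is not used there, and only the resource-free facts $t\in\mathcal{F}$ and \textbf{I2} are. Second, the dual of $\exists x,(x\in s\mand\ofcourse{\mathbf{Clst}})$ must be identified correctly as $\forall x,(x\in s\linimp\whynot\not{\mathbf{Clst}})$. Both steps rely on the classical (involutive) affine logic of \cite{shulman2022affine}, which I will cite for the de Morgan dualities.
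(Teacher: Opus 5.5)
Your proposal is correct and is essentially the paper's proof. You use the same filter from Lemma~\ref{lemma:isFilter} and the same contrapositive reading of $\MCpt(s)$, which reduces the goal to $\forall x:X, x\in s\linimp\whynot{\exists t:\P{X}, t\in\mathcal{F}\mand x\notin\clo t}$, and the same witness $t:=\compl{U_\alpha}$, while making the $\whynot$-functoriality and resource bookkeeping explicit.

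One small remark: the proposition carries no hypothesis on $\inte$, so rather than invoking \textbf{I2} you should simply identify $\compl{(\compl{U_\alpha})}$ with $U_\alpha$ via involutive negation. You already do this for the outer double negation, and it is what the paper tacitly does.
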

\begin{proof}
Let $\mathcal{F}$ be as in (\ref{eq:isFilter}). To show $\compl{s} \in \mathcal{F}$, by Lemma~\ref{lemma:isFilter} and the definition of compactness, it is enough to deduce $\forall x:X, x \in s \linimp \whynot{\exists t : \P{X}, t \in \mathcal{F} \mand x \notin \clo t}$,
which follows from the assumption $s \subseteq \subsetwn{\left(\bigcup_{\alpha : A} \inte U_{\alpha}\right)}$ because $\compl{U_{\alpha}}\in \mathcal{F}$ for any $\alpha:A$.
\end{proof}

Clearly, when all $U_{\alpha}$ are open, the above proposition corresponds to the classical fact that any open covering has a finite subcovering.
Note that, a similar result can be shown even if the condition for the cluster point does not have exponential conjunction.
This modality is required for the following propositions, which are affine analogs of the classical proposition that a closed subset of a compact set is also compact.

\begin{proposition}\label{prop:closed set in compact is compact}
The additive intersection of a compact subset $s$ and a closed decidable subset $c$ is also compact:
\[
\MCpt(s) \linimp \clo c \subseteq c \linimp X \subseteq c \acup \compl{c}\linimp \MCpt(s \acap c).
\]
\end{proposition}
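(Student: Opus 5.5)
Fix a filter $\mathcal{F}$ with $\PFil{\mathcal{F}}$ and $\compl{(s\acap c)}\notin\mathcal{F}$; we must produce $x\in s\acap c$ with $\ofcourse{\mathbf{Clst}(\mathcal{F},x)}$. The plan is to enlarge $\mathcal{F}$ to a filter $\mathcal{G}$ that ``contains $c$'':
\[
\mathcal{G}:=\Set{t:\P{X}}{\exists u:\P{X},\ u\in\mathcal{F}\mand u\mcap c\subseteq t}.
\]
Then we apply $\MCpt(s)$ to $\mathcal{G}$, and finally use that $c$ is closed to see that the resulting cluster point lies in $c$. The hypotheses $\clo c\subseteq c$ and $X\subseteq c\acup\compl{c}$ are used inside the argument, so I will first promote them to $\ofcourse$-form. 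This is legitimate only if they are duplicable, and I would handle that point carefully. The first is a universally quantified implication, which may be used once per point. The second is used pointwise, and only the instance at the final point and the instance needed to verify $\compl{s}\notin\mathcal{G}$ should be required.

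\textbf{Step 1: $\PFil{\mathcal{G}}$.} Monotonicity and $X\in\mathcal{G}$ (take $u=X$) are immediate. Closure under $\mcap$ uses $u\mcap c\subseteq t$ and $u'\mcap c\subseteq t'$. Since $c$ occurs twice, we need $c\subseteq c\mcap c$. Decidability gives this: pointwise $x\in c\aor x\notin c$, and in the first case we use\ldots\ but actually $x\in c$ need not duplicate. To avoid this, I would instead define
\[
\mathcal{G}:=\Set{t}{\exists u,\ u\in\mathcal{F}\mand u\subseteq t\acup \compl{c}}.
\]
Closure under $\mcap$ then reduces to $(t\acup\compl{c})\mcap(t'\acup\compl{c})\subseteq (t\mcap t')\acup\compl{c}$ pointwise. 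This holds by case analysis on the additive disjunctions, using that $x\notin c$ absorbs the other factor by weakening. All of this takes place under $\ofcourse$, since $\PFil{\mathcal{F}}$ is $\ofcourse$ and the definitions are axioms.

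\textbf{Step 2: $\compl{s}\notin\mathcal{G}$.} Suppose $u\in\mathcal{F}$ and $u\subseteq\compl{s}\acup\compl{c}$. We want $u\subseteq\compl{(s\acap c)}$, i.e.\ $x\in u\mand x\in s\aand x\in c\linimp\mfalse$. This is a case split on the $\aor$, projecting the appropriate component of the $\aand$. Monotonicity of $\mathcal{F}$ then gives $\compl{(s\acap c)}\in\mathcal{F}$, contradicting the hypothesis.

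\textbf{Step 3: the cluster point.} By $\MCpt(s)$ we obtain $x\in s$ with $\ofcourse{\mathbf{Clst}(\mathcal{G},x)}$. Since $\mathcal{F}\subseteq\mathcal{G}$, we get $\ofcourse{\mathbf{Clst}(\mathcal{F},x)}$. For $x\in c$, we use decidability at $x$. If $x\in c$, we are done. If $x\notin c$, note $c\in\mathcal{G}$ (take $u=X$, since $X\subseteq c\acup\compl{c}$), so $x\in\clo c\subseteq c$, contradicting $x\notin c$. This yields $\mfalse$, which proves anything. Thus $x\in s\aand x\in c$ is obtained from a single copy of $x\in s$ in the first case; in the second case we derive $\mfalse$. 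The delicate point is that $x\in s$ and $\ofcourse{\mathbf{Clst}}$ must both be output: the $\ofcourse$ lets us use the cluster property once to obtain $x\in\clo c$ while keeping a copy.

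\textbf{Expected obstacle.} The hard part is managing resources: checking that $\mathcal{G}$'s filter axioms hold under $\ofcourse$ (which needs the hypotheses on $c$ to be used as axioms), and Step 3's duplication. This explains why the cluster-point condition in the definition carries $\ofcourse$.
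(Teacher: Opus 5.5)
Your construction is the paper's. By monotonicity of $\mathcal{F}$, your $\mathcal{G}$ coincides with the paper's $\mathcal{F}'=\{t \mid t\acup\compl{c}\in\mathcal{F}\}$, and your Steps 1 and 2 are correct. Neither step uses the hypotheses on $c$, contrary to your closing remark. That is why $\PFil{\mathcal{G}}$ follows under $\ofcourse$ from $\PFil{\mathcal{F}}$ alone, and why Step 2 needs no instance of decidability.

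The problem is Step 3 as written. You first instantiate the decidability hypothesis $X\subseteq c\acup\compl{c}$ at $x$ to case-split. Then, in the branch $x\notin c$, you use the same hypothesis again, as a whole, to get $c\in\mathcal{G}$. In the statement this hypothesis carries no $\ofcourse$, so it is a single linear resource. A non-exponential universal hypothesis can be instantiated only once in total, not ``once per point.'' There is also no way to promote it to $\ofcourse$-form. So your argument consumes decidability twice and is not valid in affine logic.

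The fix is to delete the case split, which is redundant anyway: your second branch already derives $x\in c$ without ever using $x\notin c$. The corrected Step 3 runs as follows.
\begin{itemize}
\item Use decidability once, together with $X\in\mathcal{F}$, to get $c\in\mathcal{G}$.
\item Use one copy of $\mathbf{Clst}(\mathcal{G},x)$ to get $x\in\clo c$, and closedness once to get $x\in c$.
\item Form $x\in s\aand x\in c$ additively from the shared context. The $x\in s$ component discards the rest by weakening.
\item Use the remaining copy of $\ofcourse{\mathbf{Clst}(\mathcal{G},x)}$, together with $\mathcal{F}\subseteq\mathcal{G}$ (provable outright), to promote to $\ofcourse{\mathbf{Clst}(\mathcal{F},x)}$.
\end{itemize}
This is exactly the paper's argument. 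Each hypothesis on $c$ is used exactly once, and only the cluster-point property is used twice, which is what its $\ofcourse$ pays for.
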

\begin{proof}
Let $\mathcal{F}$ be a filter on $X$.
Let
\[
\mathcal{F}' := \Set{t:\P{X}}{t\acup \compl{c} \in \mathcal{F}}.
\]
Then, $\mathcal{F}'$ is also a filter.
Indeed, monotonicity and non-emptiness are clear.
If $t_0, t_1 \in \mathcal{F}'$, since
\[
(t_0 \acup \compl{c}) \mcap (t_1 \acup \compl{c}) \subseteq(t_0\mcap t_1)\acup \compl{c},
\]
we have $t_0\mcap t_1 \in \mathcal{F}'$.

The condition $\compl{\left(s\acap c\right)} \notin \mathcal{F}$ means $\compl{s}\notin\mathcal{F}'$. Since $s$ is compact, we can take $x\in s$ which is a cluster point of $\mathcal{F}'$ with $\ofcourse$-modality. Since $c$ is decidable, $c \in \mathcal{F}'$. Thus, $x \in \clo c$. Since $c$ is closed, $x \in c$. Furthermore, $x$ is also a cluster point of $\mathcal{F}$ with $\ofcourse$-modality since $\mathcal{F}\subseteq \mathcal{F}'$.
\end{proof}
Note that, the fact that $x$ is a cluster point of $\mathcal{F}'$ is used twice in the above proof.

A similar proposition holds for the multiplicative intersection under a different assumption.
\begin{proposition}\label{prop:closed set in compact is compact, multiplicative}
The multiplicative intersection of a compact subset $s$ and a closed subset $c$, such that $\ofcourse{c\subseteq c\mcap c}$, is also compact:
\[
\MCpt(s) \linimp \clo c \subseteq c \linimp \ofcourse{\left(c \subseteq c\mcap c\right)}\linimp \MCpt(s \mcap c).
\]
\end{proposition}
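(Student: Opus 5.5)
The plan follows the proof of Proposition~\ref{prop:closed set in compact is compact}, replacing the additive union with $\compl{c}$ by the multiplicative one. Given a filter $\mathcal{F}$ with $\compl{\left(s\mcap c\right)}\notin\mathcal{F}$, I set
\[
\mathcal{F}' := \Set{t:\P{X}}{t\mcup \compl{c} \in \mathcal{F}}.
\]
The steps, in order, are:
\begin{enumerate}
\item show $\PFil{\mathcal{F}'}$;
\item transfer the hypothesis on $\mathcal{F}$ to $\compl{s}\notin\mathcal{F}'$ and apply $\MCpt(s)$ to get a point $x\in s$ with $\ofcourse{\mathbf{Clst}(\mathcal{F}',x)}$;
\item use one copy of that cluster property to get $x\in c$, and the rest to get $\ofcourse{\mathbf{Clst}(\mathcal{F},x)}$.
\end{enumerate}

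\textbf{Step 1: $\mathcal{F}'$ is a filter.} Since $\PFil{\mathcal{F}}$ is under $\ofcourse$, and we also have $\ofcourse{\left(c\subseteq c\mcap c\right)}$, everything below may be used freely and the whole argument can be put under $\ofcourse$. Monotonicity of $\mathcal{F}'$ follows from monotonicity of $\mathcal{F}$, since $t\subseteq t'$ gives $t\mcup\compl{c}\subseteq t'\mcup\compl{c}$. Non-emptiness holds because $X\mcup\compl{c}$ contains $X$ (affine weakening). The key point is closure under $\mcap$: we need
\[
(t_0\mcup\compl{c})\mcap(t_1\mcup\compl{c})\subseteq (t_0\mcap t_1)\mcup\compl{c}.
\]
Pointwise, linear distributivity $(A\mor C)\mand(B\mor C)\linimp (A\mand B)\mor C\mor C$ reduces this to the contraction $C\mor C\linimp C$ for $C=(x\notin c)$. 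This is the linear negation of $x\in c\linimp x\in c \mand x\in c$, i.e.\ exactly the hypothesis $c\subseteq c\mcap c$. It is used once per pair $t_0,t_1$, which is why it must carry $\ofcourse$.

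\textbf{Step 2: applying compactness of $s$.} By De Morgan, $\compl{\left(s\mcap c\right)}$ is affinely equivalent to $\compl{s}\mcup\compl{c}$. So $\compl{(s\mcap c)}\notin\mathcal{F}$ becomes $\compl{s}\notin\mathcal{F}'$, using O1-style invariance, which follows from monotonicity of $\mathcal{F}$. Applying $\MCpt(s)$ to $\mathcal{F}'$ then yields $x\in s$ with $\ofcourse{\mathbf{Clst}(\mathcal{F}',x)}$.

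\textbf{Step 3: extracting $x\in c$ and the cluster property for $\mathcal{F}$.} First, $c\in\mathcal{F}'$: the set $c\mcup\compl{c}$ is the whole space $X$, since $A\mor\not{A}$ is provable, and $X\in\mathcal{F}$. One copy of the cluster property therefore gives $x\in\clo c$, and closedness ($\clo c\subseteq c$) gives $x\in c$. Next, $\mathcal{F}\subseteq\mathcal{F}'$, because $t\subseteq t\mcup\compl{c}$ by weakening and $\mathcal{F}$ is monotone. This inclusion is available under $\ofcourse$, so functoriality of $\ofcourse$ turns $\ofcourse{\mathbf{Clst}(\mathcal{F}',x)}$ into $\ofcourse{\mathbf{Clst}(\mathcal{F},x)}$. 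We again use that $\ofcourse$ allows the cluster property to be duplicated. Combining $x\in s$, $x\in c$ and $\ofcourse{\mathbf{Clst}(\mathcal{F},x)}$ multiplicatively gives $x\in s\mcap c$ together with the required cluster property, which proves $\MCpt(s\mcap c)$.

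I expect the main obstacle to be Step 1's intersection closure. It is the only place where the extra hypothesis is needed, and one has to check carefully that the duplication of $x\notin c$ occurs inside a $\mor$, so that it is precisely dual to $c\subseteq c\mcap c$. Some care is also needed in showing that the filter property of $\mathcal{F}'$ can be placed under $\ofcourse$, since it depends only on $\ofcourse$-ed hypotheses.
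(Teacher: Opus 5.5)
Your proof is correct and takes the same approach as the paper. You use the auxiliary filter $\mathcal{F}'=\Set{t:\P{X}}{t\mcup\compl{c}\in\mathcal{F}}$, closed under $\mcap$ via $\compl{c}\mcup\compl{c}\subseteq\compl{c}$ (the dual of $\ofcourse{(c\subseteq c\mcap c)}$); then $c\in\mathcal{F}'$ gives $x\in c$, and $\mathcal{F}\subseteq\mathcal{F}'$ transfers the $\ofcourse$-cluster property. Your extra bookkeeping spells out what the paper leaves implicit: the linear-distributivity step, and the check that only $\ofcourse$-ed hypotheses are used so the filter property of $\mathcal{F}'$ can be promoted.
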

\begin{proof}
Let $\mathcal{F}$ be a filter on $X$.
Let
\[
\mathcal{F}' := \Set{t:\P{X}}{t\mcup \compl{c} \in \mathcal{F}}.
\]
Then, $\mathcal{F}'$ is also a filter.
Indeed, monotonicity and non-emptiness are clear.
The following holds with $\ofcourse$-modality for $t_0, t_1 \in \mathcal{F}'$ by the assumption that $\ofcourse{c\subseteq c \mcap c}$:
\[
(t_0 \mcup \compl{c}) \mcap (t_1 \mcup \compl{c}) \subseteq(t_0\mcap t_1)\mcup \compl{c}\mcup \compl{c} \subseteq (t_0\mcap t_1)\mcup \compl{c}.
\]
Thus, we have $t_0\mcap t_1 \in \mathcal{F}'$.

The condition $\compl{\left(s\mcap c\right)} \notin \mathcal{F}$ means $\compl{s}\notin\mathcal{F}'$. Since $s$ is compact, we can take $x\in s$ which is a cluster point of $\mathcal{F}'$ with $\ofcourse$-modality. Clearly, $c \in \mathcal{F}'$. Thus, $x \in \clo c$. Since $c$ is closed, $x \in c$. Furthermore, $x$ is also a cluster point of $\mathcal{F}$ with $\ofcourse$-modality since $\mathcal{F}\subseteq \mathcal{F}'$.
\end{proof}





Next, we consider continuous images of compact sets.

\begin{definition}
Let $f:X\to Y$ be a function.
For a subset $t:\P{Y}$, the \textit{inverse image} $f^{-1}(t)$ of $t$ under $f$ is defined as the functional composition $t\circ f$.
For a subset $s:\P{X}$, the \textit{image} $f(s)$ of $s$ under $f$ is impredicatively defined as
\[
f(s):=\Set{y:Y}{\forall t:\P{Y}, \left(s \subseteq f^{-1}(t) \linimp y \in t\right)}.
\]
\end{definition}

\begin{remark}
While a more natural and predicative way to define the image is
\[f(s)':=\Set{y:Y}{\exists x:X, \left(x\in s \mand f(x) = y\right)},\]
we avoid using equality. As long as the equality is reflexive, $f(s)\subseteq f(s)'$ holds. However, to show the converse, it is necessary that, for all $t:\P{X}$, $f(x)\in t\mand f(x)=y \linimp y \in t$.
\end{remark}

\begin{definition}
Let $X$ and $Y$ be sets with operators.
A function $f:X\to Y$ is \textit{continuous} if
\[
\mathbf{Conti}(f) := \left(\forall t : \P{Y}, f^{-1}(\inte t) \subseteq \inte f^{-1}(t) \right).
\]
\end{definition}
Note that this is equivalent to $\forall t : \P{Y}, \clo f^{-1}(t) \subseteq f^{-1}(\clo t)$.

\begin{theorem}\label{theorem:image of compact is compact}
For sets $X$ and $Y$ with operators, a function $f:X\to Y$, and a subset $s:\P{X}$, if $f$ is continuous with $\ofcourse$-modality and $s$ is compact, then the image of $s$ under $f$ is compact:
\[
\ofcourse{\mathbf{Conti}(f)}\linimp\MCpt(s) \linimp \MCpt(f(s)).
\]
\end{theorem}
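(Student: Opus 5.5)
The plan is to pull a filter on $Y$ back along $f$, apply compactness of $s$ to the pullback, and push the resulting cluster point forward.

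\emph{Step 1: the pullback filter.} Let $\mathcal{G}$ be a filter on $Y$ with $\compl{f(s)} \notin \mathcal{G}$. Define
\[
\mathcal{F} := \Set{r:\P{X}}{\exists t:\P{Y}, \left(t\in\mathcal{G} \mand f^{-1}(t)\subseteq r\right)}.
\]
Then $\PFil{\mathcal{F}}$ follows from $\PFil{\mathcal{G}}$, and the $\ofcourse$ carries over because the derivation uses only the $\ofcourse$-ed filter axioms of $\mathcal{G}$.
\begin{itemize}
\item Monotonicity follows by transitivity of $\subseteq$.
\item Non-emptiness follows from $X\subseteq f^{-1}(Y)$ and $Y \in \mathcal{G}$.
\item Closure under $\mcap$ holds because from $t_0,t_1\in\mathcal{G}$ with $f^{-1}(t_i)\subseteq r_i$ we get $t_0\mcap t_1\in\mathcal{G}$ and $f^{-1}(t_0\mcap t_1) = f^{-1}(t_0)\mcap f^{-1}(t_1)\subseteq r_0\mcap r_1$.
\end{itemize}

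\emph{Step 2: $\compl{s}\notin\mathcal{F}$.} Suppose $t\in\mathcal{G}$ and $f^{-1}(t)\subseteq \compl{s}$. I want to show $\compl{f(s)}\in\mathcal{G}$, which contradicts the hypothesis. By monotonicity it suffices to prove $t \subseteq \compl{f(s)}$, i.e.\ $y\in t \mand y\in f(s)\linimp \mfalse$.
\begin{itemize}
\item From $f^{-1}(t)\subseteq \compl{s}$ we get $s\subseteq f^{-1}(\compl{t})$ by contraposition in affine logic.
\item Instantiating the impredicative definition of $f(s)$ with $\compl{t}$ then gives $y \in \compl{t}$.
\item Combining $y\in\compl{t}$ with $y\in t$ yields $\mfalse$.
\end{itemize}
Every hypothesis here is used exactly once, so the argument is valid affinely.

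\emph{Step 3: apply compactness of $s$.} Compactness gives $x\in s$ with $\ofcourse{\mathbf{Clst}(\mathcal{F},x)}$. Set $y:=f(x)$.
\begin{itemize}
\item To see $f(x)\in f(s)$: for any $t$ with $s\subseteq f^{-1}(t)$, applying this to $x\in s$ gives $f(x)\in t$. This consumes $x\in s$ once, which is fine since only one use is needed.
\item For the cluster condition, take $t\in\mathcal{G}$. Then $f^{-1}(t)\in\mathcal{F}$ (witness $t$ itself, with reflexivity of $\subseteq$), so $x\in\clo f^{-1}(t)$. Continuity in the dual form $\clo f^{-1}(t)\subseteq f^{-1}(\clo t)$ then gives $f(x)\in\clo t$.
\end{itemize}
Since both $\mathbf{Clst}(\mathcal{F},x)$ and $\mathbf{Conti}(f)$ are under $\ofcourse$, the proof of $\mathbf{Clst}(\mathcal{G},f(x))$ uses only $\ofcourse$-ed resources. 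Promotion therefore yields $\ofcourse{\mathbf{Clst}(\mathcal{G},f(x))}$.

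\emph{Main obstacle.} I expect the delicate point to be the resource accounting in Step 3. The $\ofcourse$ on $\mathbf{Conti}(f)$ in the hypothesis is exactly what makes promotion possible, so I would check carefully that no linear (non-$\ofcourse$) hypothesis, such as $x\in s$ or the instance $t\in\mathcal{G}$, leaks into the promoted subproof. A second point to check is the equivalence of continuity with its closure form, which the paper notes after the definition. The contraposition in Step 2 also needs checking: I will verify that $f^{-1}(t)\subseteq\compl{s}$ affinely implies $s\subseteq \compl{f^{-1}(t)}=f^{-1}(\compl t)$, which is just currying of $\linimp\mfalse$.
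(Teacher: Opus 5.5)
Your proposal is correct and follows the paper's proof essentially step for step. It uses the same pulled-back filter, the same contrapositive argument through $s\subseteq f^{-1}(\compl{t})$ and the impredicative definition of $f(s)$, and the same push-forward of the cluster point via the closure form of continuity. You additionally spell out the promotion and resource bookkeeping that the paper leaves implicit. One small slip: for $X\in\mathcal{F}$ the inclusion actually needed is $f^{-1}(Y)\subseteq X$ rather than $X\subseteq f^{-1}(Y)$, though both are trivial.
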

\begin{proof}
\def\F{\mathcal{F}}
For $\F:\P{\P{Y}}$ which is a filter, let
\[\F':=\Set{t:\P{X}}{\exists u:\P{Y}, u\in \F \mand f^{-1}(u)\subseteq t}.\]

First, we prove that $\F'$ is a filter on $X$.
Monotonicity is clear.
We can show $X \in \F'$ by taking $Y$ as $u$.
Since $f^{-1}$ commutes with the multiplicative intersection, the closedness under intersections for $\F'$ is deduced from the same condition for $\F$. 

Second, we prove $\compl{f(s)}\notin \F \linimp \compl{s} \notin \F'$ with contraposition. If there exists $u:\P{Y}$ such that $u\in\F\mand f^{-1}(u)\subseteq \compl{s}$, since $f^{-1}(u)\subseteq \compl{s}$ is equivalent to $s\subseteq f^{-1}(\compl{u})$, we obtain $f(s)\subseteq \compl{u}$, which is equivalent to $u \subseteq \compl{f(s)}$.
Thus, $\compl{f(s)}\in \F$.

Therefore, when $\compl{f(s)}\notin \mathcal{F}$, since $s$ is compact, we can take $x:X$ in $s$ which is a cluster point of $\F'$ with $\ofcourse$-modality. It is enough to show $f(x)$ is a cluster point of $\F$ with $\ofcourse$-modality. Let $t\in \F$. Since $f^{-1}(t)\in \F'$ and $x$ is a cluster point, $x \in \clo f^{-1}(t)$. Since $f$ is continuous, $f(x)\in\clo t$.
\end{proof}

\begin{remark}
In Theorem~\ref{theorem:image of compact is compact}, while continuity on $s$ is enough for the condition of $f$ classically, we require continuity on the whole set for $f$ because we already use $x\in s$ for $f(x)\in f(s)$.
\end{remark}

The following proposition corresponds to the finite Tychonoff theorem in classical mathematics.

\begin{proposition}\label{prop:compactness_product}
Let $X,Y$ be spaces with operators, and we assume that the operator on $X$ satisfies \textbf{I4} and \textbf{I5} in Definition~\ref{def:operator}.
For a compact subset $s_X$ of $X$ and a compact subset $s_Y$ of $Y$, the additive product $s_X\atimes s_Y$ is compact with respect to $\ainte$:
\[
\MCpt(s_X)\mand\MCpt(s_Y)\linimp\MCpt\left(s_X\atimes s_Y\right).
\]
\end{proposition}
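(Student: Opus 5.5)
We follow the classical proof of the finite Tychonoff theorem via filters: push the filter forward to $X$, use compactness of $s_X$ to get a cluster point $x$, then build a filter on $Y$ that remembers $x$ and use compactness of $s_Y$.

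Let $\mathcal{F}$ be a filter on $X\times Y$ with $\compl{(s_X\atimes s_Y)}\notin\mathcal{F}$.

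\textbf{Step 1: a cluster point $x$ in $X$.}
Define the projected filter
\[
\mathcal{F}_X:=\Set{u:\P{X}}{\exists w:\P{X\times Y}, w\in\mathcal{F}\mand \pi_X^{-1}(\cdot)\ \text{of } w\text{'s projection lies in } u},
\]
i.e.\ $u\in\mathcal{F}_X$ iff $u\atimes Y\in\mathcal{F}$ (equivalently $\pi_X^{-1}(u)\in\mathcal{F}$). This is a filter, exactly as $\mathcal{F}'$ in the proof of Theorem~\ref{theorem:image of compact is compact}, since $\pi_X^{-1}$ commutes with $\mcap$.

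We need $\compl{s_X}\notin\mathcal{F}_X$. Indeed $\compl{s_X}\atimes Y\subseteq\compl{(s_X\atimes s_Y)}$ holds affinely: from $x\notin s_X$ we get $\not{(x\in s_X\aand y\in s_Y)}$ by choosing the left component. So $\compl{s_X}\in\mathcal{F}_X$ would give $\compl{(s_X\atimes s_Y)}\in\mathcal{F}$ by monotonicity.

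Compactness of $s_X$ then yields $x\in s_X$ with $\ofcourse{\mathbf{Clst}(\mathcal{F}_X,x)}$.

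\textbf{Step 2: the filter $\mathcal{G}$ on $Y$.}
Define
\[
\mathcal{G}:=\Set{v:\P{Y}}{\exists u:\P{X}, \exists w:\P{X\times Y},\ x\in\inte u\mand w\in\mathcal{F}\mand w\mcap(u\mtimes Y)\subseteq X\mtimes v}.
\]
This is "$v$ contains a section of a member of $\mathcal{F}$ over a neighbourhood of $x$".

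\begin{itemize}
\item Monotonicity is clear.
\item Non-emptiness uses \textbf{I4}: take $u=X$, $w=X\times Y$.
\item Closure under $\mcap$ uses \textbf{I5}: from $x\in\inte u\mcap\inte u'$ we get $x\in\inte(u\acap u')$, and we use $w\mcap w'$. Affinely, $(w\mcap w')\mcap((u\acap u')\mtimes Y)$ duplicates the additive $u\acap u'$ into a copy for each hypothesis, giving $X\mtimes(v\mcap v')$.
\item The whole filter condition carries $\ofcourse$ because $x$ can be reused via $\ofcourse{\mathbf{Clst}}$. This is the point to check carefully, since the definition of $\mathcal{G}$ mentions $x$ linearly.
\end{itemize}

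\textbf{Step 3: $\compl{s_Y}\notin\mathcal{G}$.}
Suppose $u,w$ witness $\compl{s_Y}\in\mathcal{G}$. Then
\[
w\mcap(u\mtimes Y)\subseteq\compl{(X\mtimes s_Y)}.
\]
Hence $w$ lies in the subset $\Set{(a,b)}{a\in u\linimp b\notin s_Y}$. Combined with $\compl{(s_X\atimes s_Y)}\notin\mathcal{F}$ and the filter property, this should give $\compl{u}\cup\compl{s_X}\notin\mathcal{F}_X$-type information, contradicting $x\in\clo\compl{u}$'s negation $x\in\inte u$. More precisely, the cluster property gives $x\in\clo t$ for $t\in\mathcal{F}_X$, and $x\in\inte u$ gives $x\notin\clo\compl{u}$, so $\compl{u}\notin\mathcal{F}_X$.

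I expect this step to be the main obstacle. One must find the right $t\in\mathcal{F}_X$ (likely $\compl{u}\acup\compl{s_X}$ or something mixing $\atimes$), and the additive product is exactly what allows splitting $\compl{(s_X\atimes s_Y)}$ into $\compl{s_X}\atimes Y$ "or" $X\atimes\compl{s_Y}$ via $\aor$. If this direct route fails, I would instead fold the hypothesis into the definition of $\mathcal{G}$ by requiring $w\mcap(u\mtimes Y)\subseteq X\mtimes v\acup(\compl{s_X}\atimes Y)$.

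\textbf{Step 4: conclude.}
Compactness of $s_Y$ gives $y\in s_Y$ with $\ofcourse{\mathbf{Clst}(\mathcal{G},y)}$, and then $(x,y)\in s_X\atimes s_Y$. To show it is a cluster point for $\ainte$, take $w\in\mathcal{F}$ and suppose $(x,y)\in\ainte\compl{w}$. This gives $u,v$ with $x\in\inte u$, $y\in\inte v$ and $u\atimes v\subseteq\compl{w}$. Then $\compl{v}\in\mathcal{G}$ via $u,w$, since $w\mcap(u\mtimes Y)\subseteq X\mtimes\compl{v}$. So $y\in\clo\compl{v}$, contradicting $y\in\inte v$.

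Here the additive product in $\ainte$ is essential: $u\atimes v\subseteq\compl{w}$ is what lets us derive the multiplicative inclusion used in $\mathcal{G}$.
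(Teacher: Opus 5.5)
Your proposal has two genuine gaps. The first is your Step~3, which you flag as the main obstacle: as set up it is false, and the repair belongs in Step~1, not in $\mathcal{G}$. The bare projection $\mathcal{F}_X=\Set{u:\P{X}}{u\atimes Y\in\mathcal{F}}$ forgets that $\mathcal{F}$ is concentrated on $s_Y$ in the second coordinate. Compactness of $s_X$ may then hand you a cluster point $x$ above which $\mathcal{F}$ lives outside $s_Y$.

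Already classically, take $X=[0,1]$, $Y=[0,2]$, $s_X=s_Y=[0,1]$, and let $\mathcal{F}$ be the principal filter of $w=\{(0,2),(1,1)\}$. Then $\compl{(s_X\atimes s_Y)}\notin\mathcal{F}$, and $x=0$ is a cluster point of your $\mathcal{F}_X$ lying in $s_X$. Yet $u=[0,\tfrac12)$ and $w$ witness $\{2\}\in\mathcal{G}$, hence $\compl{s_Y}\in\mathcal{G}$. Your fallback (adding $\compl{s_X}\atimes Y$) changes nothing, since $0\in s_X$.

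The fix is to project $\mathcal{F}$ restricted to $X\times s_Y$. With $u\catimes v:=\Set{(a,b):X\times Y}{a\in u\aor b\in v}$, put $\mathcal{F}_X:=\Set{t:\P{X}}{t\catimes\compl{s_Y}\in\mathcal{F}}$; in the example this is the principal filter of $\{1\}$ and forces $x=1$. It is a filter because $(t_0\catimes\compl{s_Y})\mcap(t_1\catimes\compl{s_Y})\subseteq(t_0\mcap t_1)\catimes\compl{s_Y}$, by case analysis on $\aor$ and weakening away a surplus $b\notin s_Y$. Moreover $\compl{s_X}\catimes\compl{s_Y}$ is literally $\compl{(s_X\atimes s_Y)}$, so $\compl{s_X}\notin\mathcal{F}_X$ is exactly the hypothesis.

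The second gap is the $\mcap$-closure of $\mathcal{G}$ in Step~2, which is invalid. Pointwise, $w\mcap(u\mtimes Y)\subseteq X\mtimes v$ says $(a,b)\in w\linimp(a\notin u\mor b\in v)$, which is a multiplicative disjunction. An additive hypothesis $a\in u\aand a\in u'$ cannot be duplicated to feed both conditions; for $u=u'$ that would be contraction. Combining two such conditions yields only $(a\notin u\mor a\notin u')\mor(b\in v\mand b\in v')$, so you would need $x\in\inte(u\mcap u')$, which \textbf{I5} does not give.

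The cure is to use the additive disjunction: $\mathcal{F}_Y:=\Set{t:\P{Y}}{\exists u:\P{X},\, u\catimes t\in\mathcal{F}\mand x\notin\clo u}$. Then:
\begin{itemize}
\item Closure under $\mcap$: case analysis gives $(u_0\catimes t_0)\mcap(u_1\catimes t_1)\subseteq(u_0\acup u_1)\catimes(t_0\mcap t_1)$, and \textbf{I5} in the dual form $x\notin\clo u_0\mand x\notin\clo u_1\linimp x\notin\clo(u_0\acup u_1)$ finishes.
\item Non-emptiness: take $u=\emptyset$ and use \textbf{I4}.
\item Step~3 becomes a one-liner: $u\catimes\compl{s_Y}\in\mathcal{F}$ means $u\in\mathcal{F}_X$, hence $x\in\clo u$.
\end{itemize}
Your Step~4 survives unchanged, since $u\atimes v\subseteq\compl{w}$ gives $w\subseteq\compl{u}\catimes\compl{v}$ directly.

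Your worry about $\ofcourse$ is unfounded. $x$ is a term, not a resource, and the filter axioms of $\mathcal{F}_Y$ use only the $\ofcourse$-ed filter axioms of $\mathcal{F}$ and the global axioms \textbf{I4} and \textbf{I5}.
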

\begin{proof}
In this proof, for $u:\P{X}$ and $v:\P{Y}$, we write
\[
u\catimes v := \compl{\left(\compl{u}\atimes\compl{v}\right)}
\equiv \Set{(x,y):X\times Y}{x\in u \aor y \in v}.
\]

For a filter $\mathcal{F}$ on $X\times Y$, let
\[
\mathcal{F}_X := \Set{t : \P{X}}{t \catimes \compl{s_Y} \in \mathcal{F}}.
\]
We show that $\mathcal{F}_X$ is a filter on $X$.
It is clear for monotonicity and non-emptiness.
If $t_0, t_1 \in \mathcal{F}_X$, since
\begin{equation}\label{eq:tststts}
\left(t_0 \catimes \compl{s_Y}\right) \mcap \left(t_1 \catimes \compl{s_Y}\right)
\subseteq
(t_0\mcap t_1)\catimes \compl{s_Y},
\end{equation}
we have $t_0\mcap t_1 \in \mathcal{F}_X$.
Clearly, $\compl{\left(s_X\atimes s_Y\right)}\notin \mathcal{F}$ is equivalent with $\compl{s_X} \notin \mathcal{F}_X$.

Thus, if $\compl{\left(s_X\atimes s_Y\right)}\notin\mathcal{F}$ and $s_X$ is compact, we can take $x\in s_X$ which is a cluster point of $\mathcal{F}_X$ with $\ofcourse$-modality.

Let
\[
\mathcal{F}_Y := \Set{t : \P{Y}}{\exists u : \P{X},\, u \catimes t \in \mathcal{F} \mand x \notin \clo u}.
\]
The fact that $x$ is a cluster point of $\mathcal{F}_X$ means $\compl{s_Y} \notin \mathcal{F}_Y$.

We show that $\mathcal{F}_Y$ is a filter on $Y$.
The monotonicity is clear.
Non-emptiness can be shown by taking $\emptyset$ as $u$, because $\clo \emptyset \subseteq \emptyset$ in $X$ from \textbf{I4}.
For $t_0, t_1 \in \mathcal{F}_Y$, we take $u_0, u_1 : \P{X}$ satisfying the condition, respectively.
Since
\begin{equation}\label{eq:ututuutt}
\left(u_0 \catimes t_0\right) \mcap \left(u_1 \catimes t_1\right)
\subseteq
(u_0\acup u_1)\catimes (t_0\mcap t_1),
\end{equation}
it is enough to show \[x \notin \clo u_0 \mand x \notin\clo u_1 \linimp x \notin \clo (u_0\acup u_1).\]
This is a direct consequence of \textbf{I5}.

Thus, if $s_Y$ is also compact, we can take $y \in s_Y$ which is a cluster point of $\mathcal{F}_Y$ with $\ofcourse$-modality.
In other words, for any $u : \P{X}$ and $v : \P{Y}$,
\[
u \catimes v \in \mathcal{F} \linimp
\left(x \in \clo u \mor y \in \clo v\right).
\]
This means that the point $(x,y) \in s_X \atimes s_Y$ is a cluster point of $\mathcal{F}$ with respect to $\ainte$ with $\ofcourse$-modality. Indeed, for any $w\in\mathcal{F}$, if $u:\P{X}$ and $v:\P{Y}$ satisfy $u\atimes v \subseteq \compl{w}$, then $\compl{u}\catimes\compl{v}\in\mathcal{F}$, so we have $x\notin \inte u \mor y \notin \inte v$, which means $(x,y)\notin \inte u \mtimes \inte v$.
Thus, $(x,y) \notin \ainte \compl{w}$.

\end{proof}

\begin{remark}
While we can give a similar argument for $s_X\mtimes s_Y$ and $\minte$, additional assumptions are necessary.
First, for the inclusion corresponding to (\ref{eq:tststts}), $s_Y$ should satisfy $\ofcourse{\left(s_Y\subseteq s_Y\mcap s_Y\right)}$.
Second, for the inclusion corresponding to (\ref{eq:ututuutt}), we need to use $u_0\mcup u_1$ instead of $u_0\acup u_1$.
Therefore, instead of \textbf{I5}, the stronger assumption that $\inte u\mcap \inte v \subseteq \inte (u\mcap v)$ is necessary.
\end{remark}
\begin{remark}
The monotonicity assumption for a filter is used more than once in several proofs.
Other conditions are also used twice in the proof in Proposition~\ref{prop:compactness_product}.
Thus, these conditions must appear with the exponential conjunction in the definition of filters (Definition~\ref{def:filter}).
\end{remark}
\section{Topology on Cuts}\label{sec:cuts}

\def\Cuts{\mathcal{C}}
\def\EP{\mathcal{I}}

In this section, we define the topology on cuts and prove the propositions corresponding to the extreme-value theorem and the Heine-Borel theorem in classical mathematics.

First, we review the notion of cuts in \cite{shulman2022affine}.
For a subset $s:\P{\Q}$ of rational numbers,
\begin{itemize}
\item $s$ is a \textit{lower set} if $\forall a,b:\Q, \left(a<b \mand b\in s\linimp a \in s\right)$,
\item $s$ is \textit{upwards-open} if $\forall a:\Q, \left(a\in s \linimp \exists b: \Q, \left(a<b\mand b \in s\right)\right)$,
\item $s$ is \textit{upwards-closed} if $\forall b:\Q, \left(\left(\forall a:\Q, \left(a < b \linimp a\in s\right)\right)\linimp b \in s\right)$,
\item $s$ is an \textit{upper set} if $\forall a,b:\Q, \left(a<b \mand a\in s\linimp b \in s\right)$,
\item $s$ is \textit{downwards-open} if $\forall b:\Q, \left(b\in s \linimp \exists a: \Q, \left(a<b\mand a \in s\right)\right)$,
\item $s$ is \textit{downwards-closed} if $\forall a:\Q, \left(\left(\forall b:\Q, \left(a < b \linimp b\in s\right)\right)\linimp a \in s\right)$.
\end{itemize}
The four types---an upwards-open lower subset $L$, an upwards-closed lower subset $\overline{L}$, a downwards-open upper subset $U$, and a downwards-closed upper subset $\overline{U}$---have a one-to-one correspondence with each other (Theorem 9.4 in \cite{shulman2022affine}). Namely,
\begin{align*}
L &\mapsto \overline{L} := \Set{b:\Q}{\forall a: \Q, \left(a<b \linimp a\in L\right)}, \\ \overline{L} &\mapsto L:=\Set{a:\Q}{\exists b: \Q, \left(a<b \mand b\in \overline{L}\right)}, \\
U &\mapsto \overline{U} := \Set{a:\Q}{\forall b: \Q, \left(a<b \linimp b\in U\right)}, \\ \overline{U} &\mapsto U:=\Set{b:\Q}{\exists a: \Q, \left(a<b \mand a\in \overline{U}\right)}, \\
L &\leftrightarrow \overline{U}:=\compl{L}, \\U &\leftrightarrow \overline{L}:=\compl{U}.
\end{align*}
Let $\mathcal{C}$ be one of these four types and we call its elements \textit{cuts}.
A cut $x:\mathcal{C}$ can be represented by any of the four subsets $x_L$, $x_{\overline{L}}$, $x_U$, and $x_{\overline{U}}$.

For $x,y:\mathcal{C}$, we define the order as
\[
x\leq y := (x_L\subseteq y_L)\equiv(x_{\overline{L}}\subseteq y_{\overline{L}})\equiv(y_U\subseteq x_U)\equiv(y_{\overline{U}}\subseteq x_{\overline{U}})
\]
and $x<y:=\not{(y\leq x)}$.
A rational number $q:\Q$ can be considered as a cut by $q_L:=\Set{r:\Q}{r<q}$.
For $q : \Q$ and $x : \Cuts$, $q<x$, $q\leq x$, $x < q$, and $x \leq q$ are equivalent to $q\in x_L$, $q \in x_{\overline{L}}$, $q\in x_U$, and $q\in x_{\overline{U}}$, respectively.

Next, we define the topology on $\Cuts$ using intervals.
We stipulate that $-\infty < x$, $-\infty \leq x$, $x < \infty$, and $x \leq \infty$ are always true (have truth value $\top$).
Let $\EP := \left(\Q \cup \{-\infty\}\right) \times \left(\Q \cup \{\infty\}\right)$ be the type of pairs of interval endpoints.
Here, the union of types is additive. In other words, $\forall(a,b):\EP, P(a,b)$ means
\[\left(\forall a,b:\Q, P(a,b)\right)\aand \left(\forall a, P(a,\infty)\right) \aand \left(\forall b:\Q, P(-\infty, b)\right) \aand P(-\infty,\infty)\]
and $\exists(a,b):\EP, P(a,b)$ means
\[\left(\exists a,b:\Q, P(a,b)\right)\aor \left(\exists a, P(a,\infty)\right) \aor \left(\exists b:\Q, P(-\infty, b)\right) \aor P(-\infty,\infty).\]
We define, for $(a,b) : \EP$,
\[[a,b]:=\Set{x:\mathcal{C}}{a\leq x \mand x \leq b},\]
\[(a,b):=\Set{x:\mathcal{C}}{a< x \mand x < b},\]
\[[a,b):=\Set{x:\mathcal{C}}{a \leq x \mand x < b},\]
\[(a,b]:=\Set{x:\mathcal{C}}{a< x \mand x \leq b}.\]



\begin{proposition}
The family of open intervals $(-,-):\EP\to \Cuts$ is a basis.
Thus, it defines a topology on $\Cuts$.
\end{proposition}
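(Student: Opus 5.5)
Two conditions in the definition of a basis have to be checked for the family $(-,-):\EP\to\Cuts$. After that, the earlier proposition on operators defined from a family of subsets shows that the induced $\inte$ is an interior operator, and Proposition~\ref{prop:topology_def} turns it into a topology.

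\emph{Covering condition.} Given $x:\Cuts$, I take $\iota=(-\infty,\infty)$. By our stipulation, $-\infty<x$ and $x<\infty$ both have truth value $\top$, so $x\in(-\infty,\infty)$ holds trivially. The existential over $\EP$ is witnessed by the last $\aor$-summand.

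\emph{Intersection condition.} Assume $x\in(a_0,b_0)\mand x\in(a_1,b_1)$, i.e.\ $a_0<x\mand x<b_0\mand a_1<x\mand x<b_1$.

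\begin{itemize}
\item First reformulate the hypotheses. For rational endpoints, $a<x$ is equivalent to $a\in x_L$ and $x<b$ to $b\in x_U$. For infinite endpoints these are just $\top$.
\item Next choose a candidate left endpoint. The natural choice is $a_2:=\max(a_0,a_1)$, computed by deciding the order on $\Q\cup\{-\infty\}$, which is decidable, so this is an additive case split. Similarly take $b_2:=\min(b_0,b_1)$.
\item Membership $x\in(a_2,b_2)$ follows because $a_2$ is literally one of $a_0,a_1$ and $b_2$ one of $b_0,b_1$. The case split on $a_0\le a_1$ uses only decidable rational facts, so I discard the unused hypotheses via weakening (affine logic). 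I keep one copy of $a_2<x$ and one of $x<b_2$ to build $a_2<x\mand x<b_2$.
\item It remains to prove $(a_2,b_2)\subseteq(a_0,b_0)\aand(a_2,b_2)\subseteq(a_1,b_1)$. Because the connective is $\aand$, each conjunct is proved separately and may use no resources from the hypotheses, only facts about rationals, which are $\ofcourse$-available. For the first conjunct, take $y$ with $a_2<y\mand y<b_2$. Since $a_0\le a_2$, lower-set closure of $y_L$ turns $a_2\in y_L$ into $a_0\in y_L$. When $a_0=a_2$ this is immediate; otherwise $a_0<a_2$ and I apply the lower-set axiom. The upper side is symmetric, using the upper-set property of $y_U$ for $b_2\le b_0$. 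The cases with infinite endpoints are trivial since the target is $\top$.
\end{itemize}

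\emph{Main obstacle.} The delicate point is resource bookkeeping. The witness $(a_2,b_2)$ must be produced from the hypotheses without duplicating them, and the inclusions must hold with no reliance on $x$. Both are fine: order comparisons between rationals (and $\pm\infty$) are decidable and duplicable, so the case split consumes nothing. The lower- and upper-set properties of a cut are part of its definition and are usable arbitrarily often; for the representation $\Cuts$ chosen, they follow from Theorem 9.4 of \cite{shulman2022affine}.

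The main tedium is the sixteen endpoint-type combinations forced by the additive union in $\EP$. I handle them uniformly by treating $-\infty$ and $\infty$ as extra elements of a decidable total order and checking the lower- and upper-set facts for them trivially.
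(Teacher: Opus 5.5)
Your proposal is correct and follows the same route as the paper: $(-\infty,\infty)$ witnesses the covering condition, and $\max$ of the left endpoints with $\min$ of the right endpoints witnesses the intersection condition. Your extra resource bookkeeping (decidable case splits on endpoints, weakening away the unused hypotheses, proving the $x$-independent inclusions from the lower/upper-set properties of cuts) just spells out what the paper's three-line proof leaves implicit.
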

\begin{proof}
The first condition is clear because $(-\infty,\infty)$ contains all cuts.

We show the second condition. We assume $x \in (q_0, r_0) \mand x \in (q_1,r_1)$.
Let $q_2 := \max(q_0,q_1)$ and $r_2:=\min(r_0, r_1)$.
Then, $x\in(q_2, r_2)$, $(q_2,r_2)\subseteq(q_0,r_0)$, and $(q_2,r_2)\subseteq(q_1,r_1)$ hold.
\end{proof}

The following theorem is the affine analogue of the extreme value theorem in classical analysis.
Because two cuts are not always comparable, we additionally assume the existence of an upper bound for arbitrary finite families.

\begin{theorem}[Extreme Value Theorem]\label{theorem:compact set has max}
Let $X$ be a topological space.
We assume that $s:\P{X}$ is a compact subset and $f:X\to \Cuts$ is a continuous function such that for any finite family $(x_i)_{i:[n]}$ in $s$, there exists $x'$ in $s$ such that $f(x_i)\leq f(x')$ for any $i$. Then, $f$ attains its maximum value on $s$:
\begin{align*}
&\MCpt(s)\mand \left(\forall n : \N, x_{\_}: [n] \to X, \left(\bigmand_{i:[n]} x_i\in s \linimp \exists x' : X, \left(x'\in s \mand\bigmand_{i:[n]} f(x_i) \leq f(x')\right)\right)\right)
\\ &\linimp \exists x':X, \left(x'\in s \mand \ofcourse{\forall x:X, \left(x\in s \linimp \mathbf{Conti}(f) \linimp f(x)\leq f(x')\right)}\right).
\end{align*}
\end{theorem}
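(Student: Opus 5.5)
The plan is to apply compactness of $s$ to the filter generated by the ``upper level sets'' of $f$ at points of $s$, and then to show that any cluster point is a maximizer. Concretely, I would set
\[
\mathcal{F} := \Set{t:\P{X}}{\exists n:\N, \exists x_{\_}:[n]\to X, \left(\bigmand_{i:[n]} x_i \in s \mand \forall z:X, \left(\bigmand_{i:[n]} f(x_i)\leq f(z) \linimp z\in t\right)\right)}.
\]
First I would check $\PFil{\mathcal{F}}$. Since this uses no hypotheses, it can be proved under $\ofcourse$.
\begin{itemize}
\item Monotonicity is immediate.
\item $X\in\mathcal{F}$ follows by taking $n=0$.
\item Closure under $\mcap$ follows by concatenating the two finite families. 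The witnesses $\bigmand x_i\in s$ and $\bigmand y_j\in s$ combine multiplicatively, and a $z$ with $\bigmand f(x_i)\leq f(z)\mand\bigmand f(y_j)\leq f(z)$ lies in $t_0\mcap t_1$.
\end{itemize}

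Next I would show $\compl{s}\notin\mathcal{F}$. Assume $\compl{s}\in\mathcal{F}$ with witnesses $x_{\_}$. The finite-upper-bound hypothesis, used exactly once, applied to $\bigmand x_i\in s$ gives $x'\in s$ with $\bigmand f(x_i)\leq f(x')$. Then $x'\in\compl{s}$, which contradicts $x'\in s$. Compactness of $s$ now yields $x'$ with $x'\in s\mand\ofcourse{\mathbf{Clst}(\mathcal{F},x')}$. It remains to derive, under $\ofcourse$, that for every $x$ with $x\in s$ and $\mathbf{Conti}(f)$ we have $f(x)\leq f(x')$. From the single hypothesis $x\in s$ (the case $n=1$), the set $t_x:=\Set{z:X}{f(x)\leq f(z)}=f^{-1}(\Set{y:\Cuts}{f(x)\leq y})$ belongs to $\mathcal{F}$. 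Hence $x'\in\clo t_x$, and continuity (in the dual form $\clo f^{-1}(t)\subseteq f^{-1}(\clo t)$) gives $f(x')\in\clo\Set{y:\Cuts}{f(x)\leq y}$.

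The remaining step, and the one I expect to be the main obstacle, is a lemma that closed upper rays in $\Cuts$ are closed: for every cut $c$, $\clo\Set{y}{c\leq y}\subseteq\Set{y}{c\leq y}$. I would prove it in the $L$-representation, i.e.\ $q\in c_L\linimp q\in y_L$, by contraposition.
\begin{enumerate}
\item Assume $q\in c_L$ and $q\notin y_L$.
\item Since $c_L$ is upwards-open, choose $q'$ with $q<q'\mand q'\in c_L$.
\item From $q\notin y_L$, i.e.\ $y\leq q$, and $q<q'$, get $y\in(-\infty,q')$.
\item Show $(-\infty,q')\subseteq\compl{\Set{z}{c\leq z}}$: from $z<q'$ and $c\leq z$ we get $q'\in c_L\subseteq z_L$, which contradicts $q'\in z_U=\compl{z_{\overline L}}$ because $z_L\subseteq z_{\overline L}$.
\item Thus $y\in\inte\compl{\Set{z}{c\leq z}}$ via the basis element $(-\infty,q')$, i.e.\ $y\notin\clo\Set{z}{c\leq z}$.
\end{enumerate}
The delicate part is the resource bookkeeping: $q'\in c_L$ must be available inside the $\forall z$ while also being consumed in building the basis witness. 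To handle this I would verify that the inclusion $(-\infty,q')\subseteq\compl{\Set{z}{c\leq z}}$ is derived from a copy of $q'\in c_L$. If that is not available directly, I would instead phrase the lemma using the four interchangeable representations of the cut so that each hypothesis is used once. Applying the lemma with $c=f(x)$ gives $f(x)\leq f(x')$, which finishes the argument.
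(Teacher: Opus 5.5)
Your proposal is correct and follows the paper's proof essentially step for step: the same filter generated by the sets $\bigmcap_{i:[n]} f^{-1}([f(x_i),\infty))$ with $x_i\in s$, the single use of the upper-bound hypothesis to get $\compl{s}\notin\mathcal{F}$, and the same passage through the cluster-point property and continuity to $f(x')\in\clo[f(x),\infty)$. The only difference is the last step, where the paper uses the $\overline{L}$ representation instead of $L$: for each $r:\Q$, $r\leq f(x)$ gives $(-\infty,r)\subseteq\compl{[f(x),\infty)}$, hence $f(x')\notin(-\infty,r)$, i.e.\ $r\leq f(x')$. That route needs neither upwards-openness nor contraposition, and your resource worry does not arise in your version either, since $q\notin y_L$ together with $q<q'$ builds $y\in(-\infty,q')$ while $q'\in c_L$ is consumed only once, in the inclusion.
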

\begin{proof}
\def\F{\mathcal{F}}
Let
\[\F:=\Set{t:\P{X}}{\exists n: \N, x_{\_} : [n]\to X, \left(\bigmand_{i:[n]} x_i\in s  \mand \bigmcap_{i:[n]} f^{-1}\left([f(x_i),\infty)\right)\subseteq t\right)}.\]

Obviously, $\F$ is a filter. Moreover, the assumption that any finite subset of $s$ has an upper bound in $s$ means that $\compl{s} \notin \F$. Therefore, if $s$ is compact, we can take $x'\in s$ which is a cluster point of $\F$ with $\ofcourse$-modality. For any $x \in s$, since $f^{-1}([f(x), \infty)) \in \F$, $x' \in \clo f^{-1}([f(x),\infty))$.
If $f$ is continuous, $f(x') \in \clo [f(x),\infty)$,
which means that, for any $(q,r)\in \EP$, if $(q,r) \subseteq (-\infty,f(x))$, then $f(x') \notin (q,r)$. Thus, for any $r : \Q$, $r \leq f(x) \linimp r \leq f(x')$, which means $f(x) \leq f(x')$.
\end{proof}
Note that, in the above theorem, the assumption that $f$ is continuous is moved to the position where it is used in the proof.
If $f$ is continuous with $\ofcourse$-modality, we can conclude that $\ofcourse{\forall x:X, \left(x\in s \linimp f(x)\leq f(x')\right)}$.
\begin{remark}
To reduce the upper bound condition to the case of two elements, we have to strengthen the expression because it is used arbitrarily many times to show the condition for arbitrary $n$. More precisely, the condition
\[
\ofcourse{\forall x,y:X, \left( x\in s \mand y \in s \linimp \exists z :X, \left(z\in s \mand \ofcourse{\left(f(x)\leq f(z)\right)}\mand f(y)\leq f(z)\right)\right)}
\]
implies the upper bound condition in the above theorem by induction for $n$.
\end{remark}

We can also prove the analogue of the Heine-Borel theorem, whose proof is essentially dependent on the assumption of propositional impredicativity. For the proof, we first state a lemma.

\begin{lemma}\label{lemma:interval_union}

For $(a,c)\in \EP$ and $b : \Q$,
\[[a,c) \subseteq [a,b]\mcup (b,c).\]
\end{lemma}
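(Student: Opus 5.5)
We need to show: for any cut $x$, $a\le x \mand x<c \linimp (a\le x \mand x\le b)\mor(b<x \mand x<c)$. The key ingredient is the affine tautology $x\le b \mor b<x$. Indeed $b<x$ is by definition $\not{(x\le b)}$, and $P\mor\not{P}$ is provable in affine logic.

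The plan is to unfold the definitions and argue pointwise. Fix $x:\Cuts$ and assume $a\le x\mand x<c$. If $a=-\infty$ we read $a\le x$ as $\top$, and similarly if $c=\infty$ we read $x<c$ as $\top$. Since $\forall(a,c):\EP$ is the additive conjunction of four cases, we handle each case separately; they are uniform, because the endpoint conditions are simply $\top$ when infinite.

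From $x\le b\mor\not{(x\le b)}$ we proceed by the rule for $\mor$ in the form
\[
P\mand (Q\mor R)\linimp (P\mand Q)\mor R,
\]
which is linear distributivity and is valid in affine logic. Apply it with $P:=a\le x\mand x<c$, $Q:=x\le b$, and $R:=b<x$. This gives
\[
(a\le x\mand x<c\mand x\le b)\mor(b<x).
\]
That is not yet the target, because $x<c$ has to go to the right-hand disjunct while $a\le x$ goes to the left. So instead we split the hypotheses: apply linear distributivity twice. First use $a\le x\mand(x\le b\mor b<x)\linimp (a\le x\mand x\le b)\mor b<x$. Then tensor with $x<c$ and distribute again:
\[
x<c\mand\bigl((a\le x\mand x\le b)\mor b<x\bigr)\linimp (a\le x\mand x\le b)\mor(b<x\mand x<c).
\]
Both steps are instances of the same valid principle, using commutativity and associativity of $\mand$ and $\mor$. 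The resulting disjunction is exactly $x\in[a,b]\mor x\in(b,c)$, that is, $x\in [a,b]\mcup(b,c)$.

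The only real point to check is that $b<x$ is literally $\not{(x\le b)}$, so that excluded middle in multiplicative form applies. This holds by the paper's definition $x<y:=\not{(y\le x)}$, with $b$ viewed as the cut $b_L$. Notably, we need no weakening and no use of $\ofcourse$. I expect no genuine obstacle; the care lies in distributing each hypothesis to the correct side of $\mor$ rather than trying to use the additive $\aor$, which would require decidability of the order.
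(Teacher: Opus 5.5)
Your proof is correct and is essentially the paper's argument. The paper reads the target $(a\le x\mand x\le b)\mor(b<x\mand x<c)$ as $\not{(a\le x\mand x\le b)}\linimp(b<x\mand x<c)$ and checks $a\le x\mand\not{(a\le x\mand x\le b)}\mand x<c\linimp b<x\mand x<c$ directly, using $a\le x$ to extract $b<x=\not{(x\le b)}$; this is the same derivation you obtain from the multiplicative excluded middle $x\le b\mor b<x$ plus two applications of linear distributivity.
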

\begin{proof}
It follows from the definition of intervals and affine logic:
\[a\leq x\mand \not{(a\leq x \mand x \leq b)} \mand x < c\linimp b < x \mand x < c.\]
\end{proof}

\begin{theorem}[Heine-Borel Theorem for Cuts]\label{theorem:heine-borel}
For any $a,b\in \Q$, $[a,b] :\P{\Cuts}$ is compact: \[\forall a,b\in \Q,\MCpt([a,b]).\]
\end{theorem}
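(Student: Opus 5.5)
We prove this with a supremum-style argument, in which the candidate cluster point is itself a cut defined impredicatively from the filter. This is the analogue of the classical proof that takes the supremum of the set of points $c$ for which $[a,c]$ "fails to carry" the filter.

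Fix $a,b$ and a filter $\mathcal{F}$ on $\Cuts$ with $\compl{[a,b]}\notin\mathcal{F}$. Using propositional impredicativity, define a lower subset of $\Q$:
\[
L := \Set{q:\Q}{\exists r:\Q,\, q<r \mand \compl{[a,r)}\notin \mathcal{F}}.
\]
Here $\compl{[a,r)}\notin\mathcal{F}$ says that the filter still "meets" $[a,r)$. By construction $L$ is an upwards-open lower set, since we take $r$ strictly above $q$. It therefore determines a cut $x$ with $x_L=L$.

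We then check $x\in[a,b]$.
\begin{itemize}
\item For $a\leq x$, note that $q<a$ gives $[a,r)$ empty for $r\le a$. The condition $\compl{[a,a']}\notin\mathcal{F}$ is obtained from the hypothesis by monotonicity for suitable $r$; if needed, use $[a,b]$ together with $\compl{[a,b]}\notin\mathcal{F}$ and Lemma~\ref{lemma:interval_union}.
\item For $x\leq b$, show that every $r>b$ with $\compl{[a,r)}\notin\mathcal{F}$ is consistent with the hypothesis. More precisely, show that $q\in L$ implies $q<b$ fails to be refuted, that is, $q\in x_L\linimp q\in b_{\overline{L}}$. This should follow from $[a,b]\subseteq[a,r)$ for $r>b$ and monotonicity.
\end{itemize}
Some care is needed so that $L$ is a genuine lower set in affine logic, but monotonicity of $\mathcal{F}$ is available with $\ofcourse$, so it can be used freely.

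The main step is to show $\ofcourse{\mathbf{Clst}(\mathcal{F},x)}$. Since the definition of $x$ depends only on $\mathcal{F}$, and $\PFil{\mathcal{F}}$ carries $\ofcourse$, the argument below uses only $\ofcourse$-resources, so the $\ofcourse$ can be introduced. We must show $t\in\mathcal{F}\linimp x\in\clo t$, i.e. $t\in\mathcal{F}\mand x\in\inte\compl{t}\linimp\mfalse$. Argue by the basis: $x\in\inte\compl t$ gives an interval $(q,r)\ni x$ with $(q,r)\subseteq\compl t$.
\begin{itemize}
\item From $x<r$, there is $r'<r$ with $r'\notin x_L$, so $\compl{[a,r'')}\in\mathcal{F}$ for all $r''>r'$. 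Choose $r''$ with $r'<r''<r$.
\item From $q<x$, there is $r_0>q$ with $\compl{[a,r_0)}\notin\mathcal{F}$; we may take $r_0\le r''$ by monotonicity.
\item Now $\compl{[a,r'')}\mcap t\in\mathcal{F}$. By Lemma~\ref{lemma:interval_union} (splitting at $q$), points of $[a,r'')$ lie in $[a,q]\mcup(q,r'')$, and $(q,r'')\subseteq\compl t$. Hence $\compl{[a,r'')}\mcap t\subseteq\compl{[a,q]}$, so $\compl{[a,q]}\in\mathcal{F}$.
\item Since $[a,r_0)\ldots$ the last step needs $\compl{[a,r_0)}\in\mathcal{F}$. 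This follows from $[a,r_0)\subseteq[a,q]$, which requires $r_0\le q$ — but we have $r_0>q$.
\end{itemize}
So the definition of $L$ must be adjusted: use closed intervals and put $q\in L$ iff $\exists r>q$ with $\compl{[a,r]}\notin\mathcal{F}$. The covering step then splits $[a,r_0]$ via the lemma into $[a,q]\mcup(q,r_0]$, which yields the contradiction.

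The main obstacle is getting these interval bookkeeping and open/closed endpoint choices exactly right in affine logic, so that every inclusion used is provable with the multiplicative connectives and each filter axiom is used under $\ofcourse$. I would first try the closed-interval variant of $L$ and check carefully that it is an upwards-open lower set.
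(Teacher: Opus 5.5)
There is a genuine gap, and it is in the one step that carries the whole proof: your candidate cluster point has the wrong polarity. This holds for the $[a,r)$ version and equally for the adjusted $[a,r]$ version. For $r<r'$ one has $\compl{[a,r']}\subseteq\compl{[a,r]}$, so by monotonicity non-membership propagates \emph{upwards}. Thus $\Set{r:\Q}{\compl{[a,r]}\notin\mathcal{F}}$ is an upper set, and the hypothesis $\compl{[a,b]}\notin\mathcal{F}$ puts $b$ and everything above it into it. Consequently, given the hypothesis, the condition $\exists r>q$ over this set holds for every $q$. For instance, $\compl{[a,b+1]}\notin\mathcal{F}$ gives $b\in x_L$, and then $x\le b$ (that is, $x_L\subseteq b_L$) would force $b<b$. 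Concretely, for the principal filter at a point $c\in[a,b]$ one has $\compl{[a,r]}\notin\mathcal{F}$ iff $c\le r$. So your $L$ is all of $\Q$ and $x=+\infty$, which is neither in $[a,b]$ nor a cluster point. Since the Boolean reading is a model of affine logic, no affine argument or endpoint bookkeeping can rescue this choice of $x$.

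The $\notin$-condition belongs on the upper side. Put $x_U:=\Set{q:\Q}{\exists r:\Q,\,r<q\mand\compl{[a,r]}\notin\mathcal{F}}$; its complement is the paper's $x_{\overline{L}}=\Set{q:\Q}{\forall r:\Q,\,r<q\linimp\compl{[a,r]}\in\mathcal{F}}$. Then $a\le x$ because $[a,r]$ is empty for $r<a$, and $x\le b$ follows from $\compl{[a,b]}\notin\mathcal{F}$ by deciding $q\le b$ or $b<q$. In the cluster step, $q<x$ now delivers a \emph{membership}, $\compl{[a,q]}\in\mathcal{F}$, which is intersected with $\compl{(q,r)}\in\mathcal{F}$ (obtained from $t\in\mathcal{F}$ and $(q,r)\subseteq\compl{t}$). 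Lemma~\ref{lemma:interval_union} gives $\compl{[a,q]}\mcap\compl{(q,r)}\subseteq\compl{[a,r)}$, hence $\compl{[a,r']}\in\mathcal{F}$ for all $r'<r$, i.e.\ $r\le x$, contradicting $x<r$. For $q=-\infty$, use $[a,r)\subseteq(q,r)$ directly. With that change, your plan (impredicative supremum cut, basis of open intervals, splitting lemma) is exactly the paper's.

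One further caution: your claim that the cluster argument uses only $\ofcourse$-resources cannot be right as stated. Instantiating $\mathbf{Clst}(\mathcal{F},x)$ at $t=\emptyset$ and using \textbf{I4} yields $\emptyset\notin\mathcal{F}$. This does not follow from $\mathbf{Fil}(\mathcal{F})$ alone, since the filter of all subsets satisfies $\mathbf{Fil}$ and contains $\emptyset$. In the argument this surfaces at basic intervals with right endpoint $\infty$, which you did not treat. There one only reaches $\compl{[a,\infty)}\in\mathcal{F}$, and since $x<\infty$ is $\top$, no contradiction arises without the linear hypothesis $\compl{[a,b]}\notin\mathcal{F}$, which cannot be used under $\ofcourse$. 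The paper's proof passes over this case too: its final step, that $r\le x$ contradicts $x<r$, presupposes $r\in\Q$. The argument does close if the hypothesis is strengthened to $\ofcourse{(\compl{[a,b]}\notin\mathcal{F})}$, but as stated this step needs more than bookkeeping.
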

\begin{proof}
\def\F{\mathcal{F}}
    Let $\F : \P{\P{\Cuts}}$ be a filter such that $\compl{[a,b]}\notin \F$. We can define an element $x$ of $\Cuts$ as
    \begin{equation}\label{eq:defining cluster point}
    x_{\overline{L}} := \Set{q:\Q}{\forall r : \Q, r<q\linimp \compl{[a,r]}\in \F},
    \end{equation}
    because the right-hand side is obviously an upwards-closed lower set.
    The inequality $a\leq x$ follows from the fact that, for any $r: \Q$ such that $r<a$, we have $[a,r]\subseteq \emptyset$.
    Since $\compl{[a,b]}\notin \F$, we have $x\leq b$. Thus $x\in [a,b]$.

    It is enough to show that $x$ is a cluster point of $\F$ with $\ofcourse$-modality. Let $t\in \F$. If $x \notin \clo t$, there exists $(q,r) : \EP$ such that $q<x<r \mand (q,r) \subseteq \compl{t}$.
    Since $t \in \F$, we have $\compl{(q,r)} \in \F$.

    If $q$ is $-\infty$, since $[a, r) \subseteq (q, r)$, clearly $\compl{[a,r)}\in \F$.
    Otherwise, by definition of $x$ and $q<x$, we have $\compl{[a,q]}\in \F$. 
    Thus, $\compl{[a,q]}\mcap\compl{(q,r)} \in \F$.
    On the other hand, by Lemma~\ref{lemma:interval_union}, $\compl{[a,q]}\mcap\compl{(q,r)}\subseteq\compl{[a,r)}$.
    Therefore, $\compl{[a,r)}\in \F$.
    
    For any $r'<r$, since $[a,r']\subseteq [a,r)$, we have $\compl{[a,r']}\in \F$.
    Thus, $r\leq x$, which contradicts $x<r$.
\end{proof}




\section{Antithesis Translation of the Heine-Borel Theorem}\label{sec:translation}

In this section, we consider the antithesis translation of the Heine-Borel theorem in the previous section.

\textit{Antithesis translation}~(\cite{shulman2022affine}) is a transformation from a proposition or a predicate $P$ in affine logic to a pair $(P^+, P^-)$ of the intuitionistic ones which satisfies $P^+\to \lnot P^-$.
It can be defined recursively as follows:
\begin{align*}
(P\aand Q)^+&:=P^+ \land Q^+, &  (P\aand Q)^-&:= P^-\lor Q^-, \\
(P\aor Q)^+&:=P^+ \lor Q^+, &  (P\aor Q)^-&:= P^-\land Q^-, \\
(\not{P})^+ &:= P^-, & (\not{P})^- &:= P^+, \\
(P\mand Q)^+&:=P^+ \land Q^+, &  (P\mand Q)^-&:= (P^+\to Q^-) \land (Q^+\to P^-), \\
(P\mor Q)^+&:=(P^- \to Q^+)\land(Q^- \to P^+), &  (P\mor Q)^-&:= P^-\land Q^-, \\
\top^+ &:= \top, & \top^- &:= \bot, \\
\bot^+ &:= \bot, & \bot^- &:= \top, \\
\left(\ofcourse{P}\right)^+ &:= P^+, & \left(\ofcourse{P}\right)^- &:= \lnot P^+, \\
\left(\whynot{P}\right)^+ &:= \lnot P^-, & \left(\whynot{P}\right)^- &:= P^-, \\
(\exists x, P)^+ &:= \exists x, P^+, & (\exists x, P)^- &:= \forall x, P^-,\\
(\forall x, P)^+ &:= \forall x, P^+, & (\forall x, P)^- &:= \exists x, P^-.
\end{align*}
If $P$ is provable in affine logic, $P^+$ is provable in intuitionistic logic.

Since the complete translation of the Heine-Borel theorem is complex, we consider the following simplified corollary.

\begin{corollary}\label{cor:corollary}
For any two rational numbers $a, b : \Q$ and two indexed families $q, r : A \to \Q$ of rational numbers,
if $[a,b] \subseteq \subsetwn{\left(\bigcup_{\alpha:A} (q_{\alpha}, r_{\alpha})\right)}$, then there exist a natural number $n : \N$ and $n$ indices $F : [n] \to A$ such that $[a,b] \subseteq \bigmcup_{i:[n]} (q_{F(i)}, r_{F(i)})$.
\end{corollary}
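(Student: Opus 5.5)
The corollary is in affine logic, and I would obtain it by combining Theorem~\ref{theorem:heine-borel} with Proposition~\ref{prop:compact has finite covering}. Take $s := [a,b]$, which is compact by Theorem~\ref{theorem:heine-borel}. Take the indexed family $U : A \to \P{\Cuts}$ given by $U_\alpha := (q_\alpha, r_\alpha)$. Proposition~\ref{prop:compact has finite covering} then gives
\[
[a,b] \subseteq \subsetwn{\left(\bigcup_{\alpha:A} \inte U_\alpha\right)} \linimp \exists n:\N, \exists F:[n]\to A,\ [a,b] \subseteq \bigmcup_{i:[n]} U_{F(i)}.
\]
The consequent is exactly the desired conclusion. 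So it only remains to derive the antecedent from the hypothesis $[a,b] \subseteq \subsetwn{\left(\bigcup_{\alpha} (q_\alpha,r_\alpha)\right)}$.

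For this I would show that each $(q_\alpha, r_\alpha)$ is open, i.e.\ $(q_\alpha,r_\alpha) \subseteq \inte (q_\alpha,r_\alpha)$. The topology on $\Cuts$ is generated by the basis of open intervals $(-,-) : \EP \to \P{\Cuts}$, with
\[
\inte t = \Set{x}{\exists \iota:\EP,\ x \in (-,-)(\iota) \mand (-,-)(\iota) \subseteq t}.
\]
Given $x \in (q_\alpha,r_\alpha)$, I choose $\iota := (q_\alpha, r_\alpha)$, which lies in the rational–rational component of $\EP$. The second factor $(q_\alpha,r_\alpha)\subseteq(q_\alpha,r_\alpha)$ is provable outright and consumes no resources, so the witness is affinely valid. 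Thus $U_\alpha \subseteq \inte U_\alpha$ for every $\alpha$.

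Next, $\exists$ and $\whynot$ are monotone, so this inclusion lifts: $\bigcup_\alpha U_\alpha \subseteq \bigcup_\alpha \inte U_\alpha$, and hence $\subsetwn{(\bigcup_\alpha U_\alpha)} \subseteq \subsetwn{(\bigcup_\alpha \inte U_\alpha)}$. Composing this with the hypothesis gives the antecedent of Proposition~\ref{prop:compact has finite covering}, and the proof is complete.

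I expect no step to be a real obstacle. The one point needing care is resource accounting: the openness argument $x \in U_\alpha \linimp x \in \inte U_\alpha$ must use $x \in U_\alpha$ exactly once, and the inclusion lemma must be a closed theorem so that it can be used under the $\forall\alpha$ and inside $\whynot$. Both hold here, since $t\subseteq t$ is provable with no hypotheses.
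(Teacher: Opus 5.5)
Your proposal is correct and follows the same route as the paper: it combines Theorem~\ref{theorem:heine-borel} with Proposition~\ref{prop:compact has finite covering}, and uses the openness of each interval $(q_\alpha,r_\alpha)$ to turn the hypothesis into that proposition's antecedent. Your witness $\iota:=(q_\alpha,r_\alpha)$, the monotonicity of $\exists$ and $\whynot$, and the resource accounting just spell out what the paper's one-line proof leaves implicit.
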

\begin{proof}
It follows from Theorem~\ref{theorem:heine-borel}, Proposition~\ref{prop:compact has finite covering}, and the fact that any open interval is open.
\end{proof}

To make the translation explicit, we check the translation of the relevant notions one by one.
From here in this section, all definitions and theorems will be considered within intuitionistic logic.

According to Theorem 9.6 in \cite{shulman2022affine},
the antithesis translation of the cuts is equivalent to the notion called \textit{(rational) cuts} in \cite{richman1998generalized}, which is also called the \textit{interval domain}:

\begin{definition}
A set $L$ of rational numbers is called a \textit{lowercut} if
\begin{itemize}
\item $L$ is a \textit{lower set}: $\forall a,b : \Q, \left(a < b \land b \in L \to a \in L\right)$, and
\item $L$ is \textit{upwards-open}: $\forall a : \Q, \left(a \in L \to \exists b : \Q, \left(a < b \land b \in L\right)\right)$.
\end{itemize}
A set $U$ of rational numbers is called an \textit{uppercut} if
\begin{itemize}
\item $U$ is an \textit{upper set}: $\forall a,b : \Q, \left(a < b \land a \in U \to b \in U\right)$, and
\item $U$ is \textit{downwards-open}: $\forall b : \Q, \left(b \in U \to \exists a : \Q, \left(a < b \land a \in U\right)\right)$.
\end{itemize}
A pair $x=(L, U)$ of a lowercut $L$ and an uppercut $U$ is called a \textit{cut} if \[\forall a,b : \Q, \left(a \in L \land b \in U \to a < b\right).\]
\end{definition}
Lowercuts and uppercuts are also called \textit{extended lower reals} and \textit{extended upper reals}, respectively~(\cite{levsnik2021synthetic}).

\def\InCuts{\mathfrak{I}}
Let $\mathfrak{L}, \mathfrak{U}$, and $\InCuts$ be the type of lowercuts, uppercuts, and (intuitionistic) cuts, respectively. For $L:\mathfrak{L}$ and $U:\mathfrak{U}$, we write
\begin{align*}
\overline{L} &:= \Set{q:\Q}{\forall r : \Q, \left(r<q \to r\in L\right)},\\
\overline{U} &:= \Set{q:\Q}{\forall r : \Q, \left(q<r \to r\in U\right)}.
\end{align*}

For a cut $x=(L, U) : \InCuts$ and a rational number $q : \Q$, we write $q \in L$, $q \in \overline{L}$, $q \in U$, and $q \in \overline{U}$ as $q < x$, $q\leq x$, $x < q$, and $x \leq q$, respectively, which correspond to the same expressions as in affine logic.

A subset $s : \P{X}$ in affine logic is translated to the following:
\begin{definition}
A \textit{complemented subset} of a type $X$ is a pair of subsets $U=(U^{+}, U^{-})$ of $X$ such that $x\in U^{+} \to x \notin U^{-}$ for any $x:X$.
\end{definition}
Note that, because we do not consider equality relations, the above definition is simpler than that in \cite{shulman2022affine}.

The inclusion relation of subsets is translated to the following:
\begin{definition}
For two complemented subsets $U=(U^{+}, U^{-}), V=(V^{+}, V^{-})$ in $X$, $U$ is \textit{included} in $V$ if $U^{+} \subseteq V^{+}$ and $V^{-} \subseteq U^{-}$, which is denoted as $U\subseteq V$.
\end{definition}

The multiplicative union of subsets is translated as follows: 
\begin{definition}
The \textit{multiplicative union} of two complemented subsets $U=(U^{+}, U^{-})$, $V=(V^{+}, V^{-})$ in $X$ is defined as follows:
\[
U \mcup V := \left(\Set{x:X}{(x\in U^{-} \to x \in V^{+}) \land (x \in V^{-} \to x \in U^{+})}, U^{-} \cap V^{-}\right).
\]
The multiplicative union of a finite family of complemented subsets is inductively defined.
\end{definition}

The intervals are translated to the following complemented subsets in cuts:
\begin{align*}
[a,b]_{\mathrm{cut}} &:= \left(\Set{x:\InCuts}{a \leq x \land x \leq b},\Set{x:\InCuts}{(a \leq x \to b < x) \land (x \leq b \to x < a)}\right),\\
(a,b)_{\mathrm{cut}} &:= \left(\Set{x:\InCuts}{a < x \land x < b},\Set{x:\InCuts}{(a < x \to b \leq x) \land (x < b \to x \leq a)}\right),\\
[a,b)_{\mathrm{cut}} &:= \left(\Set{x:\InCuts}{a \leq x \land x < b},\Set{x:\InCuts}{(a \leq x \to b \leq x) \land (x < b \to x < a)}\right),\\
(a,b]_{\mathrm{cut}} &:= \left(\Set{x:\InCuts}{a < x \land x \leq b},\Set{x:\InCuts}{(a < x \to b < x) \land (x \leq b \to x \leq a)}\right).
\end{align*}
Note that it can be directly shown that $b<c\to [a,b]_{\mathrm{cut}} \subseteq [a,c)_{\mathrm{cut}}$, etc.

We also have to consider the translation of $A \linimp \whynot{B}$, where $A$ and $B$ are propositions.
Let $(A^+, A^-)$ and $(B^+, B^-)$ be the translation of $A$ and $B$, respectively.
The positive part of the translation of $A \linimp \whynot{B}$ is $A^+\to \lnot B^- \land B^-\to A^-$.
However, $A^+ \to \lnot{B^-}$ follows from $B^-\to A^-$ since $A^+ \to \lnot{A^-}$.
Thus, this positive part is equivalent to $B^-\to A^-$.

For simplicity, we state only half of the whole translation of the Corollary~\ref{cor:corollary}:
it has the form of $A\linimp B$ and the positive part of the translation of $A\linimp B$ is $A^+\to B^+ \land B^-\to A^-$,
where $(A^+, A^-)$ and $(B^+, B^-)$ are the translations of $A$ and $B$, respectively.
We only state the $A^+\to B^+$ part of the translation of the above corollary.

Finally, the translated statement is the following:

\begin{quote}
Let $a, b$ be two rational numbers and let $(q_\alpha)_{\alpha:A}, (r_\alpha)_{\alpha:A}$ be two indexed families of rational numbers.
We assume that $\bigcap_{\alpha:A} (q_{\alpha}, r_{\alpha})_{\mathrm{cut}}^{-} \subseteq [a,b]_{\mathrm{cut}}^{-}$.
Then there exist a natural number $n : \N$ and $n$ indices $F : [n] \to A$ such that $[a,b]_{\mathrm{cut}} \subseteq \bigmcup_{i:[n]} (q_{F(i)}, r_{F(i)})_{\mathrm{cut}}$.
\end{quote}

However, upon examining the proof, we can weaken the assumption.
The condition $\bigcap_{\alpha:A} (q_{\alpha}, r_{\alpha})_{\mathrm{cut}}^{-} \subseteq [a,b]_{\mathrm{cut}}^{-}$ can be separated into the following conditions about lowercuts and uppercuts.
\begin{itemize}
\item[\textbf{CovL}:] For any lowercut $L:\mathfrak{L}$, if $q_{\alpha}\in L\to r_{\alpha}\in \overline{L}$ for all $\alpha : A$,  then $a \in \overline{L} \to b \in L$.
\item[\textbf{CovU}:] For any uppercut $U:\mathfrak{U}$, if $r_{\alpha}\in U\to q_{\alpha}\in \overline{U}$ for all $\alpha : A$,  then $b \in \overline{U} \to a \in U$.
\end{itemize}
Only \textbf{CovL} is actually used in the proof.
Thus, we have the following theorem.

\begin{theorem}\label{theorem:translated}
Let $a, b$ be two rational numbers and let $(q_\alpha)_{\alpha:A}, (r_\alpha)_{\alpha:A}$ be two indexed families of rational numbers,
such that \textbf{CovL} holds.
Then there exist a natural number $n : \N$ and $n$ indices $F : [n] \to A$ such that $[a,b]_{\mathrm{cut}} \subseteq \bigmcup_{i:[n]} (q_{F(i)}, r_{F(i)})_{\mathrm{cut}}$.
\end{theorem}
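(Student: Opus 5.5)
The plan is to carry out, directly in intuitionistic logic, the translated content of the proofs of Theorem~\ref{theorem:heine-borel} and Proposition~\ref{prop:compact has finite covering}. The filter of (\ref{eq:isFilter}) becomes the predicate ``finitely coverable''. For $r:\Q$, let $P(r)$ mean that there exist $n:\N$ and $F:[n]\to A$ with $[a,r]_{\mathrm{cut}} \subseteq \bigmcup_{i:[n]} (q_{F(i)}, r_{F(i)})_{\mathrm{cut}}$, where $\subseteq$ is inclusion of complemented subsets. Mirroring (\ref{eq:defining cluster point}), I will define a lowercut by
\[
L := \Set{q:\Q}{\exists q':\Q,\ \left(q<q' \land \forall r:\Q,\ \left(r<q' \to P(r)\right)\right)}.
\]
It is immediately a lower set. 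It is upwards-open because one can take a rational strictly between $q$ and $q'$. The strategy is then to apply \textbf{CovL} to $L$ and read off the finite subcover.

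First I check $a\in\overline{L}$. It suffices that $P(r)$ holds for every $r<a$, and I use the empty family $n=0$ here. The empty multiplicative union is $(\emptyset, \InCuts)$, so I must show that $[a,r]^{+}_{\mathrm{cut}}$ is empty and $[a,r]^{-}_{\mathrm{cut}}$ is everything. Both follow from $r<a$ and the cut axiom $a\in L_x\land b\in U_x\to a<b$, together with the monotonicity facts for $\leq,<$ between rationals and cuts. For instance, $a\leq x \to r<x$ holds, and $x\leq r\to x<a$ holds. This is the analogue of $[a,r]\subseteq\emptyset$ in the affine proof.

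Second, I verify the hypothesis of \textbf{CovL}: for each $\alpha$, $q_\alpha\in L \to r_\alpha\in\overline{L}$. Suppose $q_\alpha<q'$ and $P(r)$ holds for all $r<q'$, and let $r<r_\alpha$. I choose a rational $m$ with $q_\alpha<m<q'$, so that $P(m)$ gives a finite family $F$ covering $[a,m]_{\mathrm{cut}}$. I then append $\alpha$ to $F$. What is needed is the complemented-subset version of Lemma~\ref{lemma:interval_union}:
\[
[a,r]_{\mathrm{cut}} \subseteq [a,m]_{\mathrm{cut}} \mcup (q_\alpha, r_\alpha)_{\mathrm{cut}} \quad\text{whenever } q_\alpha<m \text{ and } r<r_\alpha,
\]
together with the facts that $\mcup$ is monotone and associative on complemented subsets. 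On positive parts, the inclusion says the following. If $a\leq x\leq r$, then $x\in[a,m]^-$ implies $m<x$, so $q_\alpha<x<r_\alpha$. Also, $x\in(q_\alpha,r_\alpha)^-$ together with $x<r_\alpha$ (which follows from $x\le r<r_\alpha$) gives $x\le q_\alpha<m$, so $x\in[a,m]^+$. On negative parts, $[a,m]^-\cap(q_\alpha,r_\alpha)^-\subseteq[a,r]^-$ is a similar case check. To prove the left case of $[a,r]^-$, I use the case $a\le x$, which yields $m<x$, hence $q_\alpha<x$, hence $r_\alpha\le x$, hence $r<x$. I expect this bookkeeping to be the main obstacle: the precise strict and non-strict inequalities between rationals and cuts, and the need to show that the negative parts of the finite multiplicative union compose correctly when the cover is extended by one interval.

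Finally, \textbf{CovL} yields $b\in L$. So there is $q'>b$ with $P(r)$ for all $r<q'$, and in particular $P(b)$ holds. This is exactly the required $n$ and $F$. Only \textbf{CovL} has been used. The existence of the finite family is not constructed explicitly; it is extracted from the membership $b\in L$, which matches the nonconstructive flavour noted in the introduction.
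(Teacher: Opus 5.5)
Your proof is correct and follows the paper's argument: you define the lowercut of left endpoints below which $[a,\cdot]_{\mathrm{cut}}$ is finitely coverable, check $a\in\overline{L}$ via the empty cover, verify the \textbf{CovL} hypothesis by appending $\alpha$ using the translated interval-union inclusion, and read off the cover from $b\in L$. The differences are cosmetic. You define $L$ by $\exists q'>q,\ \forall r<q',\ P(r)$, which builds in downward closure and avoids the paper's midpoint $e=(c+r_\alpha)/2$. You also prove the combined inclusion $[a,r]_{\mathrm{cut}}\subseteq[a,m]_{\mathrm{cut}}\mcup(q_\alpha,r_\alpha)_{\mathrm{cut}}$ directly, instead of factoring it through $[a,r_\alpha)_{\mathrm{cut}}\subseteq[a,q_\alpha]_{\mathrm{cut}}\mcup(q_\alpha,r_\alpha)_{\mathrm{cut}}$.
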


\begin{proof}
Let
\begin{equation}\label{eq:HNLower}
L:=\Set{c:\Q}{\exists d:\Q,c<d\land\exists n:\N, F:[n]\to A, [a,d]_{\mathrm{cut}}\subseteq \bigmcup_{i:[n]}(q_{F(i)},r_{F(i)})_{\mathrm{cut}}}.
\end{equation}
Obviously, $L$ is a lowercut.

We show that $q_{\alpha}\in L \to r_{\alpha}\in \overline{L}$.
We take $d$, $n$, $F$ such that $q_{\alpha}<d$ and $[a,d]_{\mathrm{cut}}\subseteq \bigmcup_{i:[n]}(q_{F(i)},r_{F(i)})_{\mathrm{cut}}$.
To show $r_\alpha \in\overline{L}$, we take $c < r_\alpha$ and prove $c \in L$.
Let $e:=(c+r_{\alpha})/2$ and it is enough to show
\[
[a,e]_{\mathrm{cut}}\subseteq \left(\bigmcup_{i:[n]}(q_{F(i)},r_{F(i)})_{\mathrm{cut}}\right)\mcup (q_\alpha,r_\alpha)_{\mathrm{cut}}.
\]
Because $[a,e]_{\mathrm{cut}} \subseteq [a,r_{\alpha})_{\mathrm{cut}}$ and $[a,q_{\alpha}]_{\mathrm{cut}}\subseteq[a,d]_{\mathrm{cut}}\subseteq \bigmcup_{i:[n]}(q_{F(i)},r_{F(i)})_{\mathrm{cut}}$, it is enough to show $[a,r_{\alpha})_{\mathrm{cut}}\subseteq [a,q_{\alpha}]_{\mathrm{cut}}\mcup (q_{\alpha},r_{\alpha})_{\mathrm{cut}}$.
Unfolding the definitions, this inclusion means that, for any $x:\InCuts$,
\begin{align*}
a\leq x \land x < r_{\alpha} \to &((a\leq x\to q_{\alpha}<x)\land(x\leq q_{\alpha} \to x<a)\to q_{\alpha}<x\land x<r_{\alpha})\\ &\land((q_\alpha<x\to r_\alpha\leq x) \land (x<r_\alpha \to x\leq q_{\alpha})\to a \leq x \land x \leq q_{\alpha})
\end{align*}
and
\begin{align*}
&(a\leq x\to q_{\alpha}<x)\land(x\leq q_{\alpha} \to x<a)\land(q_\alpha<x\to r_\alpha\leq x) \land (x<r_\alpha \to x\leq q_{\alpha})\\
&\to (a\leq x\to r_{\alpha} \leq x)\land(x < r_{\alpha} \to x<a).
\end{align*}
These propositions can be verified directly.

Thus, $L$ satisfies the condition in \textbf{CovL}.
Moreover, we have $a\in\overline{L}$ because, for any $c<a$, with $d:=(c+a)/2$ we have $c<d$ and $[a,d]_{\mathrm{cut}}\equiv(\emptyset, \InCuts)$. Therefore, $b\in L$ and the conclusion holds true.
\end{proof}

In particular, the conclusion contains $\bigcap_{i:[n]} (q_{F(i)}, r_{F(i)})_{\mathrm{cut}}^{-} \subseteq [a,b]^{-}_{\mathrm{cut}}$, which contains \textbf{CovL} for the finite covering $(q\circ F, r\circ F)$. This yields the Heine-Borel theorem for lowercuts.

\begin{corollary}[Heine-Borel Theorem for Lowercuts]
If \textbf{CovL} holds, there exists a finite family of indices such that $\textbf{CovL}$ holds even when the indices are restricted to it.
\end{corollary}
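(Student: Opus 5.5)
We derive the corollary from Theorem~\ref{theorem:translated} by checking that its conclusion gives \textbf{CovL} for the finite subfamily. Assume \textbf{CovL} for $(q_\alpha,r_\alpha)_{\alpha:A}$. Theorem~\ref{theorem:translated} provides $n$ and $F:[n]\to A$ with $[a,b]_{\mathrm{cut}}\subseteq \bigmcup_{i:[n]}(q_{F(i)},r_{F(i)})_{\mathrm{cut}}$. By definition of inclusion of complemented subsets, this gives in particular the negative-part inclusion $\bigcap_{i:[n]}(q_{F(i)},r_{F(i)})^-_{\mathrm{cut}}\subseteq [a,b]^-_{\mathrm{cut}}$. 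Here we use that the negative part of an iterated multiplicative union is the intersection of the negative parts, which follows by induction on $n$ from the definition of $\mcup$.

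It remains to show that this negative-part inclusion for the finite family implies \textbf{CovL} for $(q\circ F, r\circ F)$. Let $L$ be a lowercut with $q_{F(i)}\in L\to r_{F(i)}\in\overline{L}$ for all $i$, and suppose $a\in\overline{L}$. We want $b\in L$. The natural idea is to build a cut $x=(L,U)$ from $L$, with $U=\emptyset$. Then $x\leq c$ never holds unless vacuously; more precisely, $c\in\overline{U}$ would require every $r>c$ to lie in $U$, which is false. This choice is a cut trivially, since the disjointness condition is vacuous.

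We now check that $x\in(q_{F(i)},r_{F(i)})^-_{\mathrm{cut}}$ for each $i$. The first clause, $q<x\to r\leq x$, is exactly $q_{F(i)}\in L\to r_{F(i)}\in\overline{L}$. The second clause, $x<r\to x\leq q$, is vacuous since $U$ is empty. So $x$ lies in the intersection and hence in $[a,b]^-_{\mathrm{cut}}$. The first clause of that gives $a\leq x\to b<x$, that is, $a\in\overline{L}\to b\in L$, which is the required conclusion.

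The main point to verify is that the empty uppercut is legitimate in the intuitionistic definition. Upper set and downwards-open both hold vacuously, so it is. Aside from that, the only care needed is the routine unfolding of the negative parts of the interval translations and of the finite multiplicative union.
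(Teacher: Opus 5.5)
Your proof is correct and follows the paper's route: apply Theorem~\ref{theorem:translated}, extract the negative-part inclusion $\bigcap_{i:[n]}(q_{F(i)},r_{F(i)})^-_{\mathrm{cut}}\subseteq[a,b]^-_{\mathrm{cut}}$ from the conclusion, and observe that this yields \textbf{CovL} for $(q\circ F,r\circ F)$. The paper leaves that last step implicit, and instantiating with the cut $(L,\emptyset)$, as you do, is the right way to make it explicit.
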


Classically, lowercuts correspond to extended real numbers, and \textbf{CovL} is equivalent to $[a,b]\subseteq \bigcup_{\alpha:A} (q_{\alpha}, r_{\alpha})$. Thus, the above corollary is equivalent to the classical Heine-Borel theorem for closed intervals.

Moreover, the conclusion of the theorem also contains \textbf{CovU} for the finite covering $(q\circ F, r\circ F)$. Thus, \textbf{CovL} implies \textbf{CovU}. By repeating the previous arguments with the order of rational numbers reversed, we can prove the Heine-Borel theorem for uppercuts and the fact that \textbf{CovU} also implies \textbf{CovL}.
Therefore, the following holds:
\begin{corollary}
\textbf{CovL} and \textbf{CovU} are equivalent.
\end{corollary}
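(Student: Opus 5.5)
We prove the two implications separately. The direction \textbf{CovL} $\Rightarrow$ \textbf{CovU} has essentially been established just above, and we only make it explicit. Assume \textbf{CovL}. By Theorem~\ref{theorem:translated} we obtain $n:\N$ and $F:[n]\to A$ such that $[a,b]_{\mathrm{cut}} \subseteq \bigmcup_{i:[n]} (q_{F(i)}, r_{F(i)})_{\mathrm{cut}}$. We use the negative half of this inclusion, namely $\bigcap_{i:[n]} (q_{F(i)}, r_{F(i)})^{-}_{\mathrm{cut}} \subseteq [a,b]^{-}_{\mathrm{cut}}$.

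Now let $U:\mathfrak{U}$ satisfy $r_\alpha\in U\to q_\alpha\in\overline{U}$ for all $\alpha$, and assume $b\in\overline{U}$. We must show $a\in U$. Consider the cut $x:=(L_U,U)$ with
\[
L_U:=\Set{c:\Q}{\exists d:\Q, \left(c<d \land \forall e:\Q, \left(e\in U\to d<e\right)\right)},
\]
the "largest" lowercut compatible with $U$. It is routine to check that $L_U$ is a lowercut and that $(L_U,U)$ is a cut.

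The hypothesis on $U$ should give $x\in (q_\alpha,r_\alpha)^{-}_{\mathrm{cut}}$ for each $i$. The component $x<r\to x\le q$ is exactly $r\in U\to q\in\overline U$. The component $q<x\to r\le x$ must be checked against the definition of $L_U$, and this check is the main obstacle. If the naive choice of $L_U$ fails here, we instead use the finite-cover conclusion more cleverly, or reduce to the reversed-order version of the statement. Granting it, $x\in[a,b]^-_{\mathrm{cut}}$, which gives $x\le b\to x<a$. Since $b\in\overline U$ means $x\le b$, we get $x<a$, i.e.\ $a\in U$.

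An alternative route to this direction is to choose instead the minimal lowercut $L=\emptyset$, which has $\overline{L}=\emptyset$. The cut $(\emptyset,U)$ is always valid. Its $L$-conditions $q<x\to r\le x$ hold vacuously because $q\in\emptyset$ is false. Its $U$-conditions are the hypothesis. This choice avoids the obstacle above entirely, so we take the cut $(\emptyset,U)$, and it suffices for the component used. The conclusion $x\le b\to x<a$ then gives $b\in\overline{U}\to a\in U$, which is \textbf{CovU}.

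For the converse, \textbf{CovU} $\Rightarrow$ \textbf{CovL}, we apply the whole argument under the order-reversing map $q\mapsto -q$ on $\Q$. This map exchanges lowercuts and uppercuts and sends the data $(a,b,q_\alpha,r_\alpha)$ to $(-b,-a,-r_\alpha,-q_\alpha)$. Under this map \textbf{CovU} for the original data becomes \textbf{CovL} for the reflected data. Theorem~\ref{theorem:translated} then yields a finite subcover of the reflected data, which reflects back to a finite subcover of the original data. We then extract \textbf{CovL} exactly as above, using the cut $(L,\emptyset)$ for an arbitrary lowercut $L$. The remaining work is to check that negation transports the interval complemented subsets correctly, which is routine.
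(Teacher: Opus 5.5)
Your argument is correct and is essentially the paper's. The negative half $\bigcap_{i:[n]} (q_{F(i)},r_{F(i)})^-_{\mathrm{cut}}\subseteq[a,b]^-_{\mathrm{cut}}$ of the conclusion of Theorem~\ref{theorem:translated}, evaluated at the cut $(\emptyset,U)$, gives \textbf{CovU} for the finite family and hence for all of $A$, and the converse follows by reversing the order of $\Q$, which you make precise via $q\mapsto -q$. The tentative $L_U$ construction is unnecessary and should be dropped in favor of $(\emptyset,U)$.
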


\begin{remark}
An inhabited ($\exists x:\Q, x\in L$) lowercut $L$ is called a \textit{lower real}, and an inhabited uppercut is called an \textit{upper real}~(\cite{levsnik2021synthetic}).
Clearly, \textbf{CovL} and \textbf{CovU} are relevant only to inhabited ones. Thus, the Heine-Borel theorem for lower reals (or for upper reals) holds true.
\end{remark}

We have formalized,  in Rocq (formerly Coq), the theorem and corollaries together with their proofs, with intuitionistic logic.
The code is available in the repository \url{https://github.com/hziwara/CutsHeineBorel}.

Note that, while the proof of Theorem~\ref{theorem:translated} is intuitionistic, it is not 'constructive,' which means it does not give a way to specify the finite family of indices satisfying the condition of the theorem.
This non-constructivity stems from the use of propositional impredicativity in defining the lowercut.
In other words, it is necessary to interpret the existential quantifier in the conclusion of the theorem as a truncated one,
in the terminology of \cite{hottbook},
because this quantifier is used in (\ref{eq:HNLower}) for defining a term in $\Prop$.

\section{Related Work}\label{sec:related work}
In this section, we discuss the relationship between our results and previous work in constructive mathematics, especially about the Heine-Borel theorem.
In constructive mathematics, it is important to note that due to variations in the definitions of real numbers, topology, and coverings, there exist several non-equivalent formulations of what is called the Heine-Borel theorem.


While our approach is based on point-set, point-free approaches are often taken in constructive topology.
The result that the Heine-Borel does not imply Brouwer's fan theorem in \cite{moerdijk1984heine} and the result that the Heine-Borel holds true in \cite{cederquist1995constructive} (the formulations in these two articles are also different) are within this context and do not treat real numbers as points.
Thus, we cannot simply compare our results with them.

In Russian constructive mathematics, the Heine-Borel theorem has a counterexample (\cite{bridges1987varieties}).
In this framework, real numbers are treated as computable Cauchy sequences, which is different from the Dedekind-style approach we adopt.

In Bishop's constructive mathematics, the use of the Heine-Borel theorem is avoided (\cite{sep-mathematics-constructive}).
In the constructive reverse mathematics, it is proved that the Heine-Borel theorem is equivalent to the fan theorem (\cite{veldman2014principle,diener2018constructive}).
However, this result requires that real numbers can be approximated by rational numbers with arbitrary precision, and one-sided reals and cuts, which we consider, do not satisfy this condition.
In the abstract Stone duality framework, the Heine-Borel theorem is derived from axioms introduced there (\cite{bauer2009dedekind}).

Finally, we discuss the term 'constructive' in the title of this paper.
While some authors use 'constructive' as a synonym for 'intuitionistic', meaning the exclusion of the law of excluded middle, other authors use ‘constructive’ in a sense that implies 'predicative' (\cite{nlab:constructive_mathematics}).
Since we cannot use the term 'intuitionistic,' we use the term 'constructive' in the sense that it allows for impredicativity. However, our result may support the view that constructivity excludes impredicativity, and we may need to coin the term 'affinistic' to describe our standpoint more accurately.

\section{Conclusion}
We developed the theory of compactness in constructive mathematics via affine logic. We proved some basic theorems for compactness, including the Heine-Borel theorem, and verified its translation to intuitionistic logic.
These results imply that this approach to constructive analysis has considerable room for further development.
The definitions we introduce in this paper are tentative, because when defining notions in affine logic, whether to choose additive or multiplicative operations, and whether to apply the exponentials, are delicate issues.

While our definitions are natural in affine logic, their translations to intuitionistic logic are sometimes complicated and difficult to consider without affine logic. Thus, our study confirms that affine logic is an effective approach for constructive topology.
Moreover, the result that the argument in the Heine-Borel theorem essentially depends not on the excluded middle but on propositional impredicativity provides a novel insight into the nature of the continuum.

\bibliography{ref}
\bibliographystyle{plainnat}

\end{document}